\DeclareMathAlphabet{\pazocal}{OMS}{zplm}{m}{n}
\DeclareMathAlphabet{\mathbfcal}{OMS}{cmsy}{b}{n}
\newcommand{\R}{\mathbb{R}}
\newcommand{\Lpazo}{\pazocal{L}}
\newcommand{\Vcal}{\mathcal{V}}
\newcommand{\Ccal}{\mathcal{C}}
\newcommand{\Xcal}{\mathcal{X}}
\newcommand{\Id}{\textnormal{Id}}
\newcommand{\Lip}{\textnormal{Lip}}
\newcommand{\xb}{\boldsymbol{x}}
\newcommand{\yb}{\boldsymbol{y}}
\newcommand{\vb}{\boldsymbol{v}}
\newcommand{\wb}{\boldsymbol{w}}
\newcommand{\Lb}{\boldsymbol{L}}
\newcommand{\Lpazob}{\mathbfcal{L}}
\newcommand{\Lbx}{\boldsymbol{L}_{\xi}}
\newcommand{\Bxi}{\boldsymbol{\xi}}
\newcommand{\Bphi}{\boldsymbol{\phi}}
\newcommand{\BPhi}{\boldsymbol{\Phi}}
\newcommand{\INTSeg}[4]{\int_{#3}^{#4} #1 \textnormal{d} #2}
\newcommand{\Norm}[1]{\parallel \hspace{-0.1cm} #1 \hspace{-0.1cm} \parallel}
\newcommand{\derv}[2]{\frac{\textnormal{d} #1}{ \textnormal{d} #2}}
\newcommand{\textbn}[1]{\textnormal{\textbf{#1}}}
\newtheorem{rmk}{Remark}
\newtheorem{lem}{Lemma}
\newtheorem{Def}{Definition}
\newtheorem{thm}{Theorem}
\newtheorem{prop}{Proposition}
\newtheorem{cor}{Corollary}
\newtheorem{nota}{Notation}
\newenvironment{taggedhyp}[1]
    {\taggedhypx}
    {\endtaggedhypx}
\title{Consensus and Flocking under Communication Failures for a Class of Cucker-Smale Systems}
\author{Beno\^it Bonnet\footnote{Inria Paris $\mathsmaller{\&}$ Laboratoire Jacques-Louis Lions, Sorbonne Université, Université Paris-Diderot SPC, CNRS, Inria, 75005 Paris, France \textit{Email:} \texttt{benoit.a.bonnet@inria.fr}} \,and \'Emilien Flayac\footnote{University of Melbourne, Parkville VIC 3010, Melbourne, Australia. \textit{Email:} \texttt{emilien.flayac@unimelb.edu.au}}}%
\begin{document}

\maketitle

\begin{abstract}
In this paper, we study sufficient conditions for the emergence of asymptotic consensus and flocking in a certain class of non-linear generalised Cucker-Smale systems subject to multiplicative communication failures. Our approach is based on the combination of strict Lyapunov design together with the formulation of a suitable persistence condition for multi-agent systems. The latter can be interpreted as a lower bound on the algebraic connectivity of the time-average of the interaction graph generated by the communication weights, and provides quantitative decay estimates for the variance functional along the solutions of the system. 
\end{abstract}

{\footnotesize
\textbf{Keywords :} Multi-Agent Systems, Asymptotic Flocking, Persistence of Excitation, Strict Lyapunov Design.
}

\medskip

{\footnotesize
\textbf{MSC2020 Classification :} 34D05, 34D23, 91C20
}

\section{Introduction}
\label{section:Introduction}

The study of emergent macroscopic structures in dynamical systems describing collective behaviours has been the object of an increasing attention during the past decades, and there is by now a vast and still growing literature devoted to the investigation of asymptotic patterns formation in the class of so-called \textit{cooperative systems} \cite{smith}. These models are indeed widely used in very diverse branches of the field of mathematical modelling, ranging from the study of crowd dynamics \cite{CPT,Piccoli2018}, robot swarms \cite{RobotSwarms} and opinion propagations \cite{AlbiPareschiZanella,HK} to that of animal groups such as bird flocks \cite{ballerini} or fish schools \cite{Albi2014}. 

Since the seminal papers \cite{CS1,CS2}, a great deal of interest has been manifested towards the analysis of the so-called \textit{flocking behaviour} (see Definition \ref{def:Flocking} below) in second-order multi-agent systems (see e.g.  \cite{Caponigro2015,SparseJQMF,ControlKCS} and references therein). The latter describes the appearance of a \textit{consensus} pattern (see Definition \ref{def:consensus} below) in the velocity variable -- otherwise known as \textit{alignment} -- for generalised Cucker-Smale systems of the form 
\begin{equation}
\label{eq:Intro_CS}
\tag{CS}
\left\{
\begin{aligned}
& \dot x_i(t) = v_i(t), \\
& \dot v_i(t) = \frac{1}{N} \sum_{j=1}^N \xi_{ij}(t) \phi(|x_i(t)-x_j(t)|)(v_j(t)-v_i(t)).
\end{aligned}
\right.
\end{equation}
Here $(x_1,\dots,x_N) \in (\R^d)^N$ and $(v_1,\dots,v_N) \in (\R^d)^N$ respectively  stand for the positions and velocities of the agents, while $\phi(\cdot)$ is a positive non-linear kernel which represents the magnitude of their mutual interactions. The functions $\xi_{ij}(\cdot) \in L^{\infty}(\R_+,[0,1])$ are \textit{communication weights}, accounting for potential interaction failures that can occur in the system (e.g. when $\xi_{ij}(t)=0$, see the examples of Section \ref{section:examp} below). Alignment patterns have shown their relevance in many application fields -- in particular for modelling fleets of autonomous vehicles \cite{BeardRen} --, and have been thoroughly investigated in the \textit{full-communication} setting, i.e. when $\xi_{ij}(\cdot) \equiv 1$. When the interactions between agents are subject to possibly severe disruptions, it is then of high interest to identify sufficient conditions under which the formation of asymptotic flocking can still be guaranteed. For discrete-time first- and second-order systems, opinion formation models of this type have been extensively studied in \textit{graph-theoretic} frameworks, see for instance the seminal paper \cite{Moreau2005} and the reference monographs \cite{BeardRen,Bullo2009,Egerstedt2010}. 

Several families of time-varying topologies have already been considered in the context of alignment formation for second-order multi-agents system. In the early work \cite{Tanner2007}, the authors investigated flocking formation for a time-continuous system with non-stationary interaction topologies, under the strong assumption that the corresponding time-dependent graphs were always connected and that the switches must exhibit dwell times. These assumptions were subsequently relaxed in \cite{Martin2014}, at the price of restricting the analysis to discrete-time systems in which the maximal spreading in position between agents is a priori bounded. This led to a rather involved sufficient condition for flocking, in which both the aforementioned bound and the spreading in velocity of the initial state intervene. In \cite{RuLiXue2015}, the authors proved the convergence to flocking for a discrete-time version of \eqref{eq:Intro_CS}, in which the  communication rates $(\xi_{ij}(\cdot))$ are piecewise constant realisations of independent stationary Bernoulli processes. This convergence analysis was later improved in \cite{HeMu2019}, where asymptotic flocking was obtained for the same type of discrete-time systems, with random weights $(\xi_{ij}(\cdot))$ that are neither assumed to be symmetric nor independent. The latter result was building on the recent contribution \cite{Dong2017}, in which flocking formation was studied in the case of discrete-time systems with full communications and \textit{directed} -- i.e. asymmetric -- interaction topologies. We also mention the results of \cite{Dalmao2011} on this topic, where flocking was studied for a discrete-time version of \eqref{eq:Intro_CS} with an asymmetric and slightly more general right-hand side, under the additional structural assumption that the agents are all hierarchically directed towards a common leader (see also \cite{Tang2020}). We stress that such models of communication failures -- expressed in terms of time-varying interaction topologies -- are substantially different from several other known contributions in the literature such as \cite{Ahn2010,Ha2017,Ha2009}. Therein, the agents are assumed to be fully communicating at all times, and the disturbances in the system are modelled by means of additive white noises. In this context, it would seem that the analysis of flocking formation for general time-continuous systems of the form \eqref{eq:Intro_CS} in which the communication weights $(\xi_{ij}(\cdot))$ are merely measurable and do not exhibit any kind of hierarchical structure is still a completely open problem, even in the case of symmetric communications corresponding to \textit{undirected} interaction topologies. 

In order to establish convergence results towards consensus or flocking for general time-continuous non-linear systems, the best identified setting is that of Lyapunov analysis. Indeed in the seminal work \cite{HaLiu}, the authors proposed a simple proof of the emergence of asymptotic flocking for classical Cucker-Smale models, based on the derivation of strict-dissipation inequalities for the velocity variance functional (see Definition \ref{def:Variance} below) along the solutions of the system. However, this methodology cannot be transposed directly to the case where the weights $(\xi_{ij}(\cdot))$ may vanish arbitrarily often, since the variance functional is not strictly dissipative any more in this context. A natural idea to circumvent this difficulty is to try and formulate a suitable \textit{persistence of excitation} condition on the communication weights. Persistence conditions are indeed quite standard in classical control theory \cite{Narendra1989} -- notably for designing stabilising feedbacks \cite{Chaillet2008,Chitour2010} --, and have proven their adaptability in stability analysis at large by allowing to build \textit{strict} Lyapunov functions for non-stationary perturbations of asymptotically stable systems, see e.g. \cite{Maghenem2017,Maghenem2018} and the reference monograph \cite{MazencMalisoff}. In addition to their practical interest, strict Lyapunov functions provide quantitative convergence properties towards the equilibrium, which turn out to be crucial to ensure the formation of asymptotic flocking in the context of multi-agent systems analysis, as amply discussed below (see also \cite{Ha2009,HaLiu}). 

\medskip

The contributions of this article are twofold. The first one lies in the formulation of a suitable persistence condition for systems of the form \eqref{eq:Intro_CS}, that is adapted to the study of alignment patterns in second-order systems. We shall say that a collection of weights $(\xi_{ij}(\cdot))$ satisfies the persistence condition (PE) (see Definition \ref{def:Persistence} below) if there exists a pair $(\tau,\mu) \in \R_+^* \times (0,1]$ such that 
\begin{equation}
\label{eq:Intro_PE}
B \bigg( \Big( \tfrac{1}{\tau} \mathsmaller{\INTSeg{\Lbx(s)}{s}{t}{t+\tau}} \Big) \vb,\vb \bigg) \geq \mu B(\vb,\vb),
\end{equation}
for all $\vb \in (\R^d)^N$. Here, $B : (\R^d)^N \times (\R^d)^N \rightarrow \R$ denotes the \textit{variance bilinear form} (see Definition \ref{def:Variance} below), and $\Lbx(\cdot)$ is the time-dependent graph-Laplacian associated to the interaction weights $(\xi_{ij}(\cdot))$ of the system (see equation \eqref{eq:GraphLaplacian_Def2} below). In the context of cooperative dynamics, the persistence condition proposed in \eqref{eq:Intro_PE} has both a deep and simple meaning in terms of interaction topology. Indeed, it transcribes the fact that on average on any time window of length $\tau > 0$, the communication graph describing the interactions of the agents is connected. It also imposes a uniform lower-bound $\mu \in (0,1]$ on the so-called \textit{algebraic connectivity} of the averaged graph associated to the weights $(\xi_{ij}(\cdot))$ (see Definition \ref{def:AlgebraicConnect} below), which is the first non-zero eigenvalue of the averaged graph-Laplacian. In the way we formulate it, the persistence condition \eqref{eq:Intro_PE} further encodes two interesting ideas. Firstly, it only requires the system to be persistently exciting with respect to the agents which have not reached flocking yet. Secondly, it solely involves the communication weights $(\xi_{ij}(\cdot))$ and not the kernel $\phi(\cdot)$. For this reason, our main result Theorem \ref{thm:Flocking} cannot be recovered as a consequence of earlier contributions e.g. from \cite{Blondel2005,Hendrickx2012,Moreau2005}, where the whole graph-Laplacian is assumed to be persistent. Incidentally for \eqref{eq:Intro_CS}, this would implicitly  boil down to assuming that the maximal distance between agents is a priori bounded. While unharmful in the analysis of first-order consensus systems (see the proof of Theorem \ref{thm:Consensus} below), this is highly problematic when studying alignment formation in second-order systems, as the main difficulty to be handled is precisely that the spreading in position of the agents may diverge. 

Our second contribution is the explicit construction of time-varying trajectory-based Lyapunov functions in the spirit of \cite{MazencMalisoff} for \eqref{eq:Intro_CS}, obtained by combining the variance bilinear form and the persistence conditions \eqref{eq:Intro_PE}. We show that these functionals are strictly dissipative on a family of finite time intervals whose upper-bounds can be chosen to be arbitrarily large, which allows us to recover the non-uniform exponential convergence towards consensus for a first-order variant of \eqref{eq:Intro_CS} (see Theorem \ref{thm:Consensus} below) as well as the non-uniform exponential formation of flocking for the second-order system proper (see Theorem \ref{thm:Flocking} below). While it is known that asymptotic consensus can be recovered in directed first-order systems under mere infinite-time average connectivity assumptions -- e.g. when $\tau = +\infty$ and $\mu$ is non-constant and possibly vanishing at infinity -- (see e.g. \cite{Hendrickx2012,Moreau2005} and other works in the literature), the corresponding convergence results are inherently non-quantitative. Reciprocally in \cite{Manfredi2016}, it is proven that quantitative connectivity conditions in the spirit of \eqref{eq:Intro_PE} are in fact \textit{necessary} for the formation of exponential consensus in first-order multi-agent systems. This fact along with the seminal contributions of \cite{Carrillo2010,HaLiu} suggests that asymptotic flocking formation seems unlikely in the absence of a strictly dissipative structure, supported by some form of quantitative connectivity conditions on the underlying interaction topology. 

Finally, we would like to mention that a wealth of quantitative and non-quantitative persistence-like conditions have already been considered in the multi-agent literature devoted to consensus problems in first-order systems with time-varying interaction topologies (see e.g. \cite{BeardRen,Blondel2005,Manfredi2016,Moreau2005,Hendrickx2012,Tang2020}), as well as to design synchronising controls in second-order robot ensembles (see for instance \cite{Dasdemir2014,Maghenem2019}). However, to the best of our knowledge, this article is the first one to formulate a persistence condition in terms of the positive-definiteness of the averaged graph-Laplacian generated solely by the communication weights with respect to the variance bilinear form. In this regard, it presents the advantage of not incorporating -- either explicitly or implicitly -- any structural assumption on the interaction topology, other than being undirected. Moreover, the study of such a condition to perform a strict Lyapunov design for general time-continuous non-linear alignment systems is also new in the literature.

\medskip

The structure of the article is the following. In Section \ref{section:Consensus}, we introduce our Lyapunov approach by recovering a known result of non-uniform exponential consensus formation for persistently excited first-order dynamics. We then build on these concepts  in Section \ref{section:Flocking} to establish the formation of non-uniform exponential flocking in a class of Cucker-Smale type systems satisfying the strengthened fat tail condition \ref{hyp:K}, which is the main result of this article. In Section \ref{section:examp}, we illustrate our persistence condition on a general class of communication weights, and we conclude with some remarks and open perspectives in Section \ref{section:Conclusion}. 


\section{Consensus formation in first-order Cucker-Smale systems}
\label{section:Consensus}

In this section, we introduce the main tools used throughout this article in the particular case of consensus formation. In this context, we study first-order cooperative systems of the form
\begin{equation}
\label{eq:CS1}
\tag{CS1}
\left\{
\begin{aligned}  
& \dot x_i(t) = \frac{1}{N} \sum\limits_{j=1}^N \xi_{ij}(t) \phi(|x_i(t) - x_j(t)|)(x_j(t) - x_i(t)), \\
& x_i(0) = x_i^0,
\end{aligned}
\right.
\end{equation}
where $(x_1^0,\dots,x_N^0) \in (\R^d)^N$ is a given initial datum. We assume that $\phi \in \Lip(\R_+,\R_+^*)$ where $\R_+^* := \R_+ \backslash \{ 0\}$ denotes the set of positive real numbers, and that the communication weights $\xi_{ij}(\cdot) \in L^{\infty}(\R_+,[0,1])$ are symmetric, i.e. $\xi_{ij}(t) = \xi_{ji}(t)$ for almost every $t \geq 0$ and any $i,j \in \{1,\dots,N\}$. Below, the notation $\xb = (x_1,\dots,x_N)$ will systematically refer to the total state of the system in $(\R^d)^N$, and we shall denote by $\bar{\xb} = \tfrac{1}{N} \mathsmaller{\sum}_{i=1}^N x_i \in \R^d$ its mean value. 

In what follows, we investigate the formation of \textit{consensus} for systems of the form \eqref{eq:CS1}.

\begin{Def}
\label{def:consensus}
A solution $\xb(\cdot)$ of \eqref{eq:CS1} \textnormal{converges to consensus} if for any $i \in \{1,\dots,N\}$, it holds
\begin{equation*}
\lim\limits_{t \rightarrow +\infty} |x_i(t) - \bar{\xb}(t)| = 0. 
\end{equation*}
\end{Def} 
It is a standard strategy in multi-agent systems analysis to rewrite the equations of \eqref{eq:CS1} over $(\R^d)^N$ in matrix form, as 
\begin{equation}
\label{eq:CSM1}
\tag{$\textnormal{CSM}_1$}
\dot \xb(t) = -\Lpazob(t,\xb(t))\xb(t), \qquad \xb(0) = \xb^0,
\end{equation}
where $\Lpazob : \R_+ \times (\R^d)^N \rightarrow \R^{dN \times dN}$ is the so-called \textit{graph-Laplacian} of the system, defined by 
\begin{equation}
\label{eq:GraphLaplacian_Def1}
(\Lpazob(t,\xb) \yb)_i := \frac{1}{N} \sum\limits_{j=1}^N \xi_{ij}(t) \phi(|x_i-x_j|)(y_i-y_j),
\end{equation}
for almost every $t \geq 0$ and any $\xb,\yb \in (\R^d)^N$. In the sequel, we will also make great use of the \textit{partial graph-Laplacian} $\Lbx : \R_+ \rightarrow \R^{dN \times dN}$ associated to the weights $(\xi_{ij}(\cdot))$, defined by
\begin{equation}
\label{eq:GraphLaplacian_Def2}
(\Lbx(t) \yb)_i := \frac{1}{N} \sum\limits_{j=1}^N \xi_{ij}(t) (y_i-y_j),
\end{equation}
for almost every $t \geq 0$ and any $\yb \in (\R^d)^N$. This reformulation of multi-agent dynamics in terms of semilinear ODEs in the space of configurations is fairly general, and allows for a comprehensive study of both consensus and flocking problems via Lyapunov methods. With this goal in mind, we introduce below the so-called \textit{variance bilinear form}, defined in the spirit of \cite{Caponigro2013,Caponigro2015}. 

\begin{Def} 
\label{def:Variance}
The \textnormal{variance bilinear form} $B : (\R^d)^N \times (\R^d)^N \rightarrow \R$ is defined by
\begin{equation}
\label{eq:DefVariance}
B(\xb,\yb) := \frac{1}{N} \sum_{i=1}^N \langle x_i,y_i \rangle - \langle \bar{\xb} , \bar{\yb} \rangle,
\end{equation}
for any $\xb,\yb \in (\R^d)^N$. It is symmetric and positive semi-definite.
\end{Def}

It is a classical observation in the analysis of finite-dimensional multi-agent systems that the state space $(\R^d)^N$ can be written as an orthogonal sum of the form $(\R^d)^N := \Ccal \oplus \Ccal^{\perp}$, where
\begin{equation*}
\Ccal := \Big\{ \xb \in (\R^d)^N ~\text{s.t.}~ x_1 = \dots = x_N \Big\}, 
\end{equation*}
is the so-called \textit{consensus manifold}, and $\Ccal^{\perp} := \{ \xb \in (\R^d)^N ~\text{s.t.}~ \bar{\xb} = 0\}$. Denoting by $\xb := \xb_{\Ccal} + \xb_{\perp}$ the corresponding decomposition of an element $\xb \in (\R^d)^N$, it can be easily checked from \eqref{eq:DefVariance} that  
\begin{equation}
\label{eq:VarianceOrthogonal}
B(\xb,\xb) = B(\xb_{\perp},\xb_{\perp}), 
\end{equation}
so that $B(\xb,\xb) = 0$ if and only if $\xb \in \Ccal$. Thus, the evaluation $B(\xb,\xb)$ of the variance bilinear form provides the distance between a given $\xb \in (\R^d)^N$ and the consensus manifold $\Ccal$. In the sequel, we will state our results in terms of the \textit{standard deviation} $X(\cdot)$ of a solution $\xb(\cdot)$ of \eqref{eq:CSM1}, defined by
\begin{equation}
\label{eq:StandardDev}
X(t) := \sqrt{B(\xb(t),\xb(t))},
\end{equation} 
for all times $t \geq 0$. We now list some useful properties of $\Lpazob(\cdot,\cdot)$ and $B(\cdot,\cdot)$. 

\begin{prop}
\label{prop:SemiPositiveL}
It holds that $\Lpazob(t,\xb)\yb \in \Ccal^{\perp}$ for almost every $t \geq 0$ and any $\xb,\yb \in (\R^d)^N$, and the graph-Laplacian $\Lpazob(t,\xb) \in \R^{dN \times dN}$ is symmetric and positive semi-definite with respect to $B(\cdot,\cdot)$. Moreover, the variance bilinear form supports the following Cauchy-Schwarz inequality 
\begin{equation}
\label{eq:CauchyVariance}
\hspace{2cm} B(\xb,\yb) \leq \sqrt{B(\xb,\xb)} \sqrt{B(\yb,\yb)}.
\end{equation} 
\end{prop}

\begin{proof}
By summing over $i \in \{1,\dots,N\}$ the components in \eqref{eq:GraphLaplacian_Def1} and recalling that the communication weights $(\xi_{ij}(\cdot))$ are symmetric, i.e. $\xi_{ij}(\cdot) = \xi_{ji}(\cdot)$ for every $i,j \in \{1,\dots,N\}$, one has
\begin{equation*}
\frac{1}{N} \sum_{i=1}^N \big( \Lpazob(t,\xb)\yb \big)_i = \frac{1}{N^2} \sum_{i,j=1}^N \xi_{ij}(t) \phi(|x_i-x_j|)(y_i-y_j) = 0,
\end{equation*}
which can be equivalently written as $\Lpazob(t,\xb)\yb \in \Ccal^{\perp}$. Similarly, observe that
\begin{equation*}
\begin{aligned}
B(\Lpazob(t,\xb)\yb,\yb) & = \frac{1}{N^2} \sum\limits_{i,j=1}^N \xi_{ij}(t) \phi(|x_i-x_j|)\langle y_i,y_i-y_j \rangle = \frac{1}{2N^2} \sum\limits_{i,j=1}^N \xi_{ij}(t) \phi(|x_i-x_j|)|y_i-y_j|^2 \geq 0,
\end{aligned}
\end{equation*}
so that $\Lpazob(t,\xb)$ is symmetric and positive semi-definite with respect to $B(\cdot,\cdot)$. Considering the decompositions $\xb := \xb_{\Ccal} + \xb_{\perp}$ and $\yb := \yb_{\Ccal} + \yb_{\perp}$ of $\xb,\yb \in (\R^d)^N$, it can finally be checked that 
\begin{equation*}
\begin{aligned}
B(\xb,\yb) = B(\xb_{\perp},\yb_{\perp}) = \frac{1}{N} \sum_{i=1}^N \langle (\xb_{\perp})_i , (\yb_{\perp})_i \rangle & \leq \bigg( \frac{1}{N} \sum_{i=1}^N |(\xb_{\perp})_i|^2 \bigg)^{1/2} \bigg( \frac{1}{N} \sum_{i=1}^N |(\yb_{\perp})_i|^2 \bigg)^{1/2} \\
& = \sqrt{B(\xb_{\perp},\xb_{\perp})} \sqrt{B(\yb_{\perp},\yb_{\perp})} = \sqrt{B(\xb,\xb)} \sqrt{B(\yb,\yb)}, 
\end{aligned}
\end{equation*}
where we used \eqref{eq:VarianceOrthogonal} as well as the standard Cauchy-Schwarz inequalities in $\R^d$ and $\R^N$ successively. 
\end{proof}

We are now ready to introduce our notion of \textit{persistence of excitation} for Cucker-Smale type multi-agent dynamics subject to multiplicative communication failures.

\begin{Def} 
\label{def:Persistence}
We say that the weights $(\xi_{ij}(\cdot))$ satisfy the \textnormal{persistence of excitation} condition $(\textnormal{PE})$ if there exists a pair $(\tau,\mu) \in \R_+^* \times (0,1]$ such that
\begin{equation}
\label{eq:PE}
\tag{$\textnormal{PE}$}
B \left( \left(\tfrac{1}{\tau} \mathsmaller{\INTSeg{\Lb_{\xi}(s)}{s}{t}{t+\tau}} \right) \xb,\xb \right) \geq \mu B(\xb,\xb),
\end{equation} 
for almost every $t \geq 0$ and all $\xb \in (\R^d)^N$. 
\end{Def} 

\begin{rmk} Observe that condition \eqref{eq:PE} involves only the communication weights $(\xi_{ij}(\cdot))$ through $\Lbx(\cdot)$, and not the trajectories $\xb(\cdot)$ of the system. Moreover, it is formulated using the bilinear form $B(\cdot,\cdot)$, which encodes the idea that one only needs the persistence to hold along directions which are orthogonal to the consensus manifold $\Ccal$. Finally, \eqref{eq:PE} can be interpreted as a lower bound on the so-called \textnormal{algebraic connectivity} (see e.g. \cite{Martin2014,Egerstedt2010,Motsch2014}) of the average of the interaction graph with weights $(\xi_{ij}(\cdot))$ over every time-window of length $\tau > 0$, as illustrated in Section \ref{section:examp} below. 
\end{rmk}

In the following theorem, we prove that solutions of \eqref{eq:CS1} converge to consensus when the persistence assumption \eqref{eq:PE} holds, with a non-uniform exponential rate. This result is not new in itself, and can be derived from earlier works dealing with consensus formations in undirected graphs, such as \cite{Blondel2005,Moreau2005}. However, the proof strategy that we develop here is original in itself, and treating this familiar case allows for a progressive introduction of the concepts that will be necessary later on in Section \ref{section:Flocking} to prove our main result Theorem \ref{thm:Flocking}. 

\begin{thm}[Non-uniform exponential consensus]
\label{thm:Consensus} 
Let $\phi(\cdot) \in \Lip(\R_+,\R_+^*)$ be a positive kernel and suppose that \eqref{eq:PE} holds with parameters $(\tau,\mu) \in \R_+^* \times (0,1]$. Then for any $\xb^0 \in (\R^d)^N$, there exist constants $\alpha_M,\gamma_M >0$ given by \eqref{eq:ConstantsDef1} such that every solution $\xb(\cdot)$ of \eqref{eq:CS1} starting from $\xb^0$ satisfies
\begin{equation*}
X(t) \leq \alpha_M  X(0) e^{-\gamma_M t}, 
\end{equation*}
for all times $t \geq 0$, with $X(\cdot)$ being defined as in \eqref{eq:StandardDev}. In particular, every solution of \eqref{eq:CS1} converges to consensus with a non-uniform exponential rate. 
\end{thm}

\begin{proof}[Proof of Theorem \ref{thm:Consensus}]
Observe that if $\xb^0 \in \Ccal$, i.e. if the system is initially in a consensus configuration, then $\dot{\xb}(t) = 0$ and $\xb(t) = \xb^0$ for all times $t \geq 0$. Thus, we only need to consider the case $\xb^0 \notin \Ccal$. 

By standard diameter estimates on first-order cooperative systems (see e.g. \cite[Proposition 2.1]{Motsch2014}), there exists a radius $R > 0$ depending only $\xb^0 \in (\R^d)^N$ such that $\max_{i \in \{1,\dots,N\}}|x_i(t)| \leq R$ for all times $t \geq 0$. Therefore, since $\phi(\cdot)$ is positive and continuous, there exist two constants $\gamma_0,\gamma_R > 0$ depending only on $R > 0$ -- and thus on $\xb^0 \in (\R^d)^N$ --, such that 
\begin{equation}
\label{eq:PhiBoundConsensus}
\gamma_0 \leq \min_{r \in [0,2R]} \phi(r) \leq \max_{r \in [0,2R]} \phi(r) \leq \gamma_R.
\end{equation}
Let $\Norm{\Lpazob(t,\xb)}_B$ denote the operator seminorm of $\Lpazob(t,\xb)$ with respect to $B(\cdot,\cdot)$, which is given by 
\begin{equation*}
\begin{aligned}
\Norm{\Lpazob(t,\xb)}_B ~ := \sup_{\yb \in (\R^d)^N} \sqrt{\frac{B \big(\Lpazob(t,\xb) \yb , \Lpazob(t,\xb)\yb \big)}{B(\yb,\yb)}}, 
\end{aligned}
\end{equation*}
and consider the constant 
\begin{equation}
\label{eq:c_def}
c \hspace{0.2cm} := \sup_{(t,\xb)} \bigg\{ \Norm{\Lpazob(t,\xb)}_B^{1/2} ~\text{s.t.}~~ t \geq 0 ~~\text{and}~ \max_{i \in \{1,\dots,N\}} |x_i| \leq R \bigg\},
\end{equation}
which is finite as a consequence of \eqref{eq:PhiBoundConsensus}. We also introduce the time-state dependent family of matrices $\psi_{\tau} : \R_+ \rightarrow \R^{dN \times dN}$, defined by 
\begin{equation}
\label{eq:Psi_Def}
\psi_{\tau}(t) := (1+c^2)\tau \, \Id - \frac{1}{\tau} \INTSeg{\INTSeg{\Lpazob(\sigma,\xb(\sigma))}{\sigma}{t}{s}}{s}{t}{t+\tau},
\end{equation}
where $\Id$ denotes the identity matrix of $(\R^d)^N$. Observe that $\psi_{\tau}(\cdot)$ is Lipschitz continuous and thus differentiable almost everywhere by Rademacher's theorem (see e.g. \cite[Theorem 3.2]{EvansGariepy}), and that its pointwise derivative is given explicitly by
\begin{equation}
\label{eq:PsiDerivative}
\dot \psi_{\tau}(t) = \Lpazob(t,\xb(t)) - \frac{1}{\tau} \INTSeg{\Lpazob(s,\xb(s))}{s}{t}{t+\tau}.
\end{equation}
By definition of $c > 0$ in \eqref{eq:c_def} and the Cauchy-Schwarz inequality \eqref{eq:CauchyVariance} supported by $B(\cdot,\cdot)$, it also holds 
\begin{equation*}
0 \leq B(\Lpazob(t,\xb)\yb,\yb) \leq c^2 B(\yb,\yb),
\end{equation*}
for every $\xb,\yb \in (\R^d)^N$, which by linearity of the integral allows us to derive the estimates 
\begin{equation*}
0 \leq  B \bigg( \Big( \tfrac{1}{\tau} \mathsmaller{\INTSeg{\INTSeg{\Lpazob(\sigma,\xb(\sigma))}{\sigma}{t}{s}}{s}{t}{t+\tau}} \Big) \yb,\yb \bigg) \leq \tau c^2 B(\yb,\yb).
\end{equation*}
These latter provide the following matrix bounds on $\psi_{\tau}(\cdot)$ along solutions of \eqref{eq:CS1}
\begin{equation}
\label{eq:PsiBounds}
\sqrt{\tau} X(t) \leq \sqrt{B(\psi_{\tau}(t)\xb(t),\xb(t))} \leq \sqrt{(1+c^2)\tau} X(t),
\end{equation}
for all $t \geq 0$. This leads us to consider the following trajectory-based candidate Lyapunov function
\begin{equation}
\label{eq:Xcal_Def}
\Xcal_{\tau}(t): = \lambda X(t) + \sqrt{B(\psi_{\tau}(t)\xb(t),\xb(t))},
\end{equation}
where $\lambda > 0$ is a tuning parameter and $\xb(\cdot)$ solves \eqref{eq:CS1}. Notice that by \eqref{eq:PsiBounds}, one also has
\begin{equation}
\label{eq:XcalBound}
(\lambda + \sqrt{\tau})X(t) \leq \Xcal_{\tau}(t) \leq \Big( \lambda + \sqrt{(1+c^2)\tau} \Big) X(t).
\end{equation}
This type of construction is inspired from \cite{MazencMalisoff} and appears quite frequently in the theory of strict Lyapunov design for persistent systems. 

By Proposition \ref{prop:SemiPositiveL}, any solution $\xb(\cdot)$ of \eqref{eq:CS1} satisfies $\bar{\xb}(t) = \bar{\xb}_0$ for all times $t \geq 0$. By the invariance with respect to translations of \eqref{eq:CS1} (see e.g. \cite{MTNS2016}), we can therefore assume without loss of generality that $\bar{\xb}(\cdot) \equiv 0$. Our aim now is to prove that a strictly-dissipative inequality of the form
\begin{equation*}
\dot \Xcal_{\tau}(t) \leq - \gamma \Xcal_{\tau}(t),
\end{equation*}
holds for all times $t \geq 0$, where $\gamma >0$ is a given constant. With this goal in mind, observe first that
\begin{equation*}
\begin{aligned}
\derv{}{t} \sqrt{B(\psi_{\tau}(t) \xb(t),\xb(t))} & = \frac{1}{2\sqrt{B(\psi_{\tau}(t)\xb(t),\xb(t))}} \Big( B(\dot{\psi}_{\tau}(t) \xb(t),\xb(t)) + 2 B(\dot{\xb}(t), \psi_{\tau}(t) \xb(t)) \Big) \\
& = \frac{B \big( \dot \psi_{\tau}(t) \xb(t),\xb(t) \big)}{2 \sqrt{B \big(\xb(t),\psi_{\tau}(t)\xb(t) \big)}} - \frac{B \big( \Lpazob(t,\xb(t))\xb(t),\psi_{\tau}(t)\xb(t) \big)}{ \sqrt{B \big( \psi_{\tau}(t)\xb(t),\xb(t) \big)}}, 
\end{aligned}
\end{equation*}
where we used the facts that $\psi_{\tau}(t)$ is symmetric with respect to the bilinear form $B(\cdot,\cdot)$, and that $B(\psi_{\tau}(t)\xb(t),\xb(t)) > 0$ by \eqref{eq:PsiBounds} since $X(t) > 0$. This in turn allows us to compute the time-derivative
\begin{equation*}
\begin{aligned}
\dot \Xcal_{\tau}(t) & = - \frac{\lambda}{X(t)} B \big(\Lpazob(t,\xb(t))\xb(t),\xb(t) \big) + \frac{B \big( \dot \psi_{\tau}(t) \xb(t),\xb(t) \big)}{2 \sqrt{B \big(\psi_{\tau}(t)\xb(t),\xb(t) \big)}} - \frac{B \big( \Lpazob(t,\xb(t))\xb(t),\psi_{\tau}(t)\xb(t) \big)}{ \sqrt{B \big( \psi_{\tau}(t)\xb(t),\xb(t) \big)}},
\end{aligned}
\end{equation*}
for almost every $t \geq 0$. By using \eqref{eq:PsiDerivative} and \eqref{eq:PsiBounds}, we obtain the following differential estimate
\begin{equation}
\label{eq:ConsensusEstimate1}
\begin{aligned}
\dot \Xcal_{\tau}(t) \leq ~ & - \, \frac{1}{2\sqrt{(1+c^2)\tau}X(t)} B \bigg( \Big( \tfrac{1}{\tau} \mathsmaller{\INTSeg{\Lpazob(s,\xb(s))}{s}{t}{t+\tau}} \Big) \xb(t),\xb(t) \bigg) \\
& + \frac{1}{X(t)} \Big( \frac{1}{2\sqrt{\tau}} - \sqrt{(1+c^2)\tau} - \lambda \Big) B \Big( \Lpazob(t,\xb(t))\xb(t),\xb(t) \Big) \\
& + \frac{1}{\sqrt{B(\psi_{\tau}(t)\xb(t),\xb(t))}} B \bigg( \Big( \tfrac{1}{\tau} \mathsmaller{\INTSeg{\INTSeg{\Lpazob(\sigma,\xb(\sigma))}{\sigma}{t}{s}}{s}{t}{t+\tau}} \Big) \xb(t) , \Lpazob(t,\xb(t))\xb(t) \bigg).
\end{aligned}
\end{equation}
for almost every $t \geq 0$. We start by estimating the first line in \eqref{eq:ConsensusEstimate1}. Notice that since $|x_i(t)| \leq R$ for all times $t \geq 0$ and every $i \in \{1,\dots,N\}$, it holds as a consequence of \eqref{eq:PhiBoundConsensus} that
\begin{equation*}
\min_{1 \leq i,j \leq N} \phi(|x_i(t)-x_j(t)|) \geq \min_{r \in [0,2R]} \phi(r) \geq \gamma_0, 
\end{equation*}
for all times $t \geq 0$. By \eqref{eq:GraphLaplacian_Def1}, this in turn implies 
\begin{equation*}
\begin{aligned}
B \big( \Lpazob(t,\xb(t))\yb,\yb \big) & \geq \frac{1}{2N^2} \sum_{i,j=1}^N \gamma_0 \, \xi_{ij}(t) |y_i-y_j|^2  = \gamma_0 \, B \big( \Lb_{\xi}(t)\yb,\yb \big), 
\end{aligned}
\end{equation*}
for any $\yb \in (\R^d)^N$, where $\Lb_{\xi}(t) \in \R^{dN \times dN}$ is defined in \eqref{eq:GraphLaplacian_Def2} and refers to the graph-Laplacian associated to the communication weights $(\xi_{ij}(t))$ at time $t \geq 0$. This together with \eqref{eq:PE} then yields
\begin{equation}
\label{eq:ConsensusEstimate2}
B \bigg( \Big( \tfrac{1}{\tau} \mathsmaller{\INTSeg{\Lpazob(s,\xb(s))}{s}{t}{t+\tau}} \Big) \xb(t),\xb(t) \bigg) \geq \gamma_0 \, B \bigg( \Big( \tfrac{1}{\tau} \mathsmaller{\INTSeg{\Lb_{\xi}(s)}{s}{t}{t+\tau}} \Big) \xb(t),\xb(t) \bigg) \geq  \mu \gamma_0 X^2(t),
\end{equation}
for all times $t \geq 0$. For the third line of \eqref{eq:ConsensusEstimate1}, one has by definition of the operator norm $\Norm{\cdot}_B$ that
\begin{equation}
\label{eq:ConsensusEstimate3}
\begin{aligned}
& B \bigg( \tfrac{1}{\tau} \Big( \mathsmaller{\INTSeg{\INTSeg{\Lpazob(\sigma,\xb(\sigma))}{\sigma}{t}{s}}{s}{t}{t+\tau}} \Big) \xb(t) , \Lpazob(t,\xb(t))\xb(t) \bigg)  \\
& \hspace{1cm} \leq \sqrt{B \bigg( \Big( \tfrac{1}{\tau} \mathsmaller{\INTSeg{\INTSeg{\Lpazo(\sigma,\xb(\sigma))}{\sigma}{s}{t}}{s}{t}{t+\tau}} \Big) \xb(t) ,\Big( \tfrac{1}{\tau} \mathsmaller{\INTSeg{\INTSeg{\Lpazo(\sigma,\xb(\sigma))}{\sigma}{s}{t}}{s}{t}{t+\tau}} \Big) \xb(t) \bigg)} \\
& \hspace{1.5cm}  \times \sqrt{B \Big( \Lpazo(t,\xb(t)) \xb(t),\Lpazo(t,\xb(t)) \xb(t) \Big)} \\
& \hspace{1cm} \leq \tau c^2 X(t) \sqrt{B \Big(\Lpazob(t,\xb(t))\xb(t), \Lpazob(t,\xb(t))\xb(t) \Big)} \\
& \hspace{1cm} \leq \tau c^2 X(t) \Norm{\Lpazob(t,\xb(t))^{1/2}}_B  \sqrt{B \Big( \Lpazob(t,\xb(t))\xb(t),\xb(t) \Big)} \\
& \hspace{1cm} \leq \tau c^3 \Big( \tfrac{\epsilon}{2}X(t)^2 + \tfrac{1}{2\epsilon} B \big( \Lpazob(t,\xb(t))\xb(t),\xb(t) \big) \Big), 
\end{aligned}
\end{equation}
for any $\epsilon > 0$, where we used the Cauchy-Schwartz inequality \eqref{eq:CauchyVariance} supported by $B(\cdot,\cdot)$ as well as Jensen's and Young's inequalities. Merging \eqref{eq:ConsensusEstimate1},\eqref{eq:ConsensusEstimate2},\eqref{eq:ConsensusEstimate3} and using the estimates of \eqref{eq:PsiBounds}, we obtain
\begin{equation*}
\dot \Xcal_{\tau}(t) \leq  -\bigg( \frac{\mu \gamma_0}{2\sqrt{(1+c^2)\tau}} - \frac{c^3 \sqrt{\tau}}{2} \epsilon \bigg) X(t) + \frac{1}{X(t)} \bigg( \frac{1}{2 \sqrt{\tau}} + \frac{c^3 \sqrt{\tau}}{2\epsilon} - \lambda \bigg) B \big( \Lpazob(t,\xb(t))\xb(t),\xb(t) \big).
\end{equation*}
for any given $\lambda,\epsilon > 0$. Therefore, choosing the parameters
\begin{equation}
\label{eq:ExpLambda}
\epsilon := \frac{\mu \gamma_0}{2 c^3 \tau \sqrt{(1+c^2)}} \qquad \text{and} \qquad \lambda := \frac{1}{2 \sqrt{\tau}} + \frac{c^3 \sqrt{\tau}}{2\epsilon},
\end{equation}
and using \eqref{eq:XcalBound} while recalling that $\Lpazob(t,\xb(t))$ is positive semi-definite with respect to $B(\cdot,\cdot)$, we recover 
\begin{equation*}
\begin{aligned}
\dot \Xcal_{\tau}(t) & \leq - \frac{\mu \gamma_0}{4 \sqrt{(1+c^2)\tau}} X(t) \\
& \leq - \frac{\mu \gamma_0}{4 \sqrt{(1+c^2)\tau} \big(\lambda + \sqrt{(1+c^2)\tau} \big)} \Xcal_{\tau}(t).
\end{aligned}
\end{equation*}
We can then conclude by applying Gr\"onwall's Lemma together with the estimates of \eqref{eq:PsiBounds}, which yields
\begin{equation*}
X(t) \leq \alpha_M X(0) e^{-\gamma_{M} t}, 
\end{equation*}
for all times $t \geq 0$, where $\alpha_M,\gamma_{M} >0$ are given respectively by 
\begin{equation}
\label{eq:ConstantsDef1}
\alpha_M := \Big( \tfrac{\lambda + \sqrt{(1+c^2)\tau}}{\lambda + \sqrt{\tau}} \Big)  \qquad \text{and} \qquad \gamma_M := \frac{\mu \gamma_0}{4 \sqrt{(1+c^2)\tau} \big(\lambda + \sqrt{(1+c^2)\tau} \big)}, 
\end{equation}
with $\gamma_0 >0$ satisfying \eqref{eq:PhiBoundConsensus}, $c>0$ given by \eqref{eq:c_def} and $\lambda > 0$ taken as in \eqref{eq:ExpLambda}. By definition \eqref{eq:StandardDev} of the quantity $X(\cdot)$, we conclude that $\xb(\cdot)$ converges to consensus with a non-uniform exponential rate.
\end{proof}


\section{Flocking formation in second-order systems with strong fat tails}
\label{section:Flocking}

In this section we prove the main result of this article, which is the formation of asymptotic flocking in the following class of Cucker-Smale systems subject to multiplicative communication failures
\begin{equation}
\tag{CS2}
\label{eq:CS2} 
\left\{
\begin{aligned}
\dot x_i(t) & = v_i(t), ~~ & x_i(0) = x_i^0, \\ 
\dot v_i(t) & = \frac{1}{N} \sum\limits_{j=1}^N \xi_{ij}(t) \phi(|x_i(t)-x_j(t)|)(v_j(t)-v_i(t)),~~ & v_i(0) = v_i^0.
\end{aligned}
\right.
\end{equation}
Similarly to Section \ref{section:Consensus}, the dynamics in \eqref{eq:CS2} can be written as the following semilinear evolution 
\begin{equation*}
\label{eq:CSM2}
\tag{$\textnormal{CSM}_2$} 
\begin{cases}
\dot \xb(t) \hspace{-0.2cm} & = \vb(t), \hspace{2.125cm} \xb(0) = \xb^0, \\
\dot \vb(t) \hspace{-0.2cm} & = - \Lpazob(t,\xb(t)) \vb(t), ~~ \vb(0) = \vb^0,
\end{cases}
\end{equation*}
in $(\R^d)^N \times (\R^d)^N$. We now recall the definition of \textit{flocking} formation for solutions of \eqref{eq:CS2}.

\begin{Def}
\label{def:Flocking}
A solution $(\xb(\cdot),\vb(\cdot))$ of \textnormal{\eqref{eq:CS2}} \textnormal{converges to flocking} if for any $i \in \{1,\dots,N\}$, it holds
\begin{equation*}
\sup_{t \geq 0} |x_i(t) - \bar{\xb}(t)| < +\infty \qquad \text{and} \qquad \lim_{t \rightarrow +\infty} |v_i(t) - \bar{\vb}(t)| = 0.
\end{equation*}  
\end{Def} 

When studying asymptotic flocking formation for \eqref{eq:CS2}, we will assume that the positive interaction kernel $\phi(\cdot) \in \Lip(\R_+,\R_+^*)$ satisfies the following additional \textit{strong fat tail} condition. 

\begin{taggedhyp}{\textbn{(K)}}
\label{hyp:K}
There exist two constants $K,\sigma > 0$ and a parameter $\beta \in (0,\tfrac{1}{2})$ such that 
\begin{equation}
\label{eq:Assumptions_Phi}
\phi(r) \geq \frac{K}{(\sigma + r)^{\beta}},
\end{equation}
for any $r \geq 0$. In particular $\phi \notin L^1(\R_+,\R_+^*)$, and up to replacing $\phi(\cdot)$ by this lower estimate we can assume without of generality that $\phi(\cdot)$ is non-increasing. 
\end{taggedhyp}

\begin{rmk}
Hypothesis \ref{hyp:K} is a strengthened version of the usual fat tail condition which requires that $\phi \notin L^1(\R_+,\R_+^*)$, see e.g. \cite{HaLiu}. In the context of the present article, we impose that the Cucker-Smale exponent $\beta$ be less that $\tfrac{1}{2}$, whereas in the literature the expected critical exponent beyond which unconditional flocking may fail to occur is $\beta = 1$, see the discussion in Section \ref{section:Conclusion} below for more details.
\end{rmk}

\begin{rmk}
When $\phi \in \Lip(\R_+,\R_+^*)$ is bounded from below by a positive constant, flocking always occurs for \eqref{eq:CS2} in the full-communication setting, namely when $\xi_{ij}(\cdot) \equiv 1$ (see e.g. \cite{CS1,HaLiu,ControlKCS}). In the case where the communication weights $(\xi_{ij}(\cdot))$ satisfy \eqref{eq:PE}, this result still holds for \eqref{eq:CS2} and can be recovered as a simple consequence of Theorem \ref{thm:Consensus}. On the other hand for slim-tailed kernels $\phi(\cdot) \in L^1(\R_+,\R_+^*)$, one can easily construct examples of initial conditions $(\xb^0,\vb^0) \in (\R^d)^N \times (\R^d)^N$ for which asymptotic flocking already fails in the full-communication setting (see e.g. \cite{Caponigro2015}). 
\end{rmk}

One can check that for all times $t \geq 0$, any solution $(\xb(\cdot),\vb(\cdot))$ of \eqref{eq:CSM2} satisfies
\begin{equation*}
\dot{\bar{\xb}}(t) = \bar{\vb}(t) \qquad \text{and} \qquad \dot{\bar{\vb}}(t) = 0.
\end{equation*}
By invoking again the invariance properties under translations of multi-agent systems, we can assume without loss generality that $\bar{\xb}(\cdot) = \bar{\vb}(\cdot) \equiv 0$, and introduce as before the standard deviation maps
\begin{equation}
\label{eq:StandardDev2}
X(t): = \sqrt{B(\xb(t),\xb(t))} \qquad \text{and} \qquad  V(t): = \sqrt{B(\vb(t),\vb(t))}, 
\end{equation} 
evaluated along solutions of \eqref{eq:CS2}. As a consequence of the symmetry of the weights $(\xi_{ij}(\cdot))$, the system \eqref{eq:CS2} is \textit{weakly dissipative}, in the sense that 
\begin{equation}
\label{eq:Weak_Dissipation}
\dot X(t) \leq V(t) \qquad \text{and} \qquad \dot V(t) \leq 0,  
\end{equation}
for almost every $t \geq 0$. In the seminal paper \cite{HaLiu}, the authors proposed a concise proof of the Cucker-Smale flocking, based on a system of \textit{strictly dissipative inequalities}. More precisely, they showed that as a consequence of the semilinear inequalities
\begin{equation}
\label{eq:HaLiu_Dissipation}
\dot X(t) \leq V(t) \qquad \text{and} \qquad \dot V(t) \leq -\phi(2 \sqrt{N} X(t)) V(t),
\end{equation}
every solution of \eqref{eq:CS2} with $\phi \notin L^1(\R_+,\R_+^*)$ and $\xi_{ij}(\cdot) \equiv 1$ converges to flocking. Our aim is to adapt their strategy while using the persistence condition \eqref{eq:PE} to build a strict Lyapunov function for $\eqref{eq:CS2}$. This is the object of the following theorem, which is the main result of this article.

\begin{thm}[Main result -- Non-uniform exponential flocking]
\label{thm:Flocking}
Let $\phi \in \Lip(\R_+,\R_+^*)$ be a non-increasing kernel satisfying hypothesis \ref{hyp:K} and suppose that \eqref{eq:PE} holds with parameters $(\tau,\mu) \in \R_+^* \times (0,1]$. Then for any $(\xb^0,\vb^0) \in (\R^d)^N \times (\R^d)^N$, there exist a radius $\bar{X}_M > 0$ and constants $\alpha_M,\gamma_M >0$ given by \eqref{eq:ConstantDef2} such that every solution $(\xb(\cdot),\vb(\cdot))$ of \eqref{eq:CS2} starting from $(\xb^0,\vb^0)$ satisfies 
\begin{equation*}
X(t) \leq \bar{X}_M \qquad \text{and} \qquad V(t) \leq \alpha_M V(0) e^{-\gamma_M t}, 
\end{equation*}
for all times $t \geq 0$, with $X(\cdot),V(\cdot)$ being defined as in \eqref{eq:StandardDev2}. In particular, every solution of \eqref{eq:CS2} converges to flocking with a non-uniform exponential decay in the velocity variable. 
\end{thm}

The proof of this result relies on the construction of strict and trajectory-dependent Lyapunov functions for \eqref{eq:CS2}, for which a system of inequalities akin to \eqref{eq:HaLiu_Dissipation} holds on bounded time intervals. This local-in-time dissipation allows us to recover a uniform upper-bound on the standard deviation in position $X(\cdot)$ by a reparametrisation of the time variable, which can in turn be leveraged to establish the (non-uniform) exponential decay of $V(\cdot)$, by repeating the arguments explored in Section \ref{section:Consensus}.

\smallskip

\begin{nota} 
In what follows, we will use the \textnormal{rescaled interaction kernel}, defined by
\begin{equation}
\label{eq:Bphi_Def}
\Bphi_{\tau}(r) := \phi \big( \sqrt{2}N (r + \tau V(0) )\big), 
\end{equation}
for any $r \geq 0$, and denote by $\BPhi_{\tau}(\cdot)$ its uniquely determined primitive which vanishes at $X(0)$, namely
\begin{equation}
\label{eq:BPhi_Def}
\BPhi_{\tau}(X) := \INTSeg{\Bphi_{\tau}(r)}{r}{X(0)}{X}. 
\end{equation}
\end{nota}

The proof of Theorem \ref{thm:Flocking} is split into a series of lemmas, which will progressively highlight the role of the different assumptions made on the system. 

\begin{lem}
\label{lem:PE_Flocking}
Let $(\xb(\cdot),\vb(\cdot))$ be a solution of \eqref{eq:CS2}. If \eqref{eq:PE} holds with $(\tau,\mu) \in \R_+^* \times (0,1]$, then 
\begin{equation}
\label{eq:PE_Flocking}
B \bigg( \Big( \tfrac{1}{\tau} \mathsmaller{\INTSeg{\Lpazob(s,\xb(s))}{s}{t}{t+\tau}} \Big) \wb,\wb \bigg) \geq \mu \, \Bphi_{\tau}(X(t)) B(\wb,\wb),
\end{equation}
for any $\wb \in (\R^d)^N$, where $\Bphi_{\tau}(\cdot)$ is defined as in  \eqref{eq:Bphi_Def}.  
\end{lem}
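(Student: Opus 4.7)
The plan is to control the interaction kernel $\phi(|x_i(s)-x_j(s)|)$ from below on the sliding window $[t,t+\tau]$ by a quantity depending only on $X(t)$, and then to reduce the statement to the persistence condition \eqref{eq:PE} applied to the purely weight-based Laplacian $\Lbx(\cdot)$.

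First, I would exploit the weak dissipation \eqref{eq:Weak_Dissipation}, which gives $V(s)\leq V(0)$ and hence $X(s)\leq X(t)+(s-t)V(0)\leq X(t)+\tau V(0)$ for every $s\in[t,t+\tau]$. Since $\bar{\xb}(\cdot)\equiv 0$, a direct computation yields $|x_i(s)|\leq \sqrt{N}\,X(s)$ and thus
\begin{equation*}
|x_i(s)-x_j(s)|\leq 2\sqrt{N}\,X(s)\leq 2\sqrt{N}\bigl(X(t)+\tau V(0)\bigr),
\end{equation*}
uniformly in $i,j\in\{1,\dots,N\}$ and $s\in[t,t+\tau]$. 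Because $\phi(\cdot)$ is non-increasing (as recalled right after Hypothesis \ref{hyp:K}), this implies the pointwise bound $\phi(|x_i(s)-x_j(s)|)\geq \Bphi_{\tau}(X(t))$, with $\Bphi_{\tau}(\cdot)$ defined as in \eqref{eq:Bphi_Def}.

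Next, I would plug this lower bound into the quadratic-form identity already derived in the proof of Proposition \ref{prop:SemiPositiveL}, namely
\begin{equation*}
B\bigl(\Lb(s,\xb(s))\wb,\wb\bigr) = \frac{1}{2N^2}\sum_{i,j=1}^N \xi_{ij}(s)\,\phi(|x_i(s)-x_j(s)|)\,|w_i-w_j|^2,
\end{equation*}
which together with the previous estimate yields $B(\Lb(s,\xb(s))\wb,\wb)\geq \Bphi_{\tau}(X(t))\,B(\Lbx(s)\wb,\wb)$ for almost every $s\in[t,t+\tau]$. Observe that $\Bphi_{\tau}(X(t))$ is constant in $s$, so one can integrate the inequality over $[t,t+\tau]$, divide by $\tau$, and finally invoke the persistence assumption \eqref{eq:PE} on $\tfrac{1}{\tau}\INTSeg{\Lbx(s)}{s}{t}{t+\tau}$ to obtain
\begin{equation*}
B\!\left(\!\Bigl(\tfrac{1}{\tau}\mathsmaller{\INTSeg{\Lb(s,\xb(s))}{s}{t}{t+\tau}}\Bigr)\wb,\wb\!\right)\geq \Bphi_{\tau}(X(t))\,B\!\left(\!\Bigl(\tfrac{1}{\tau}\mathsmaller{\INTSeg{\Lbx(s)}{s}{t}{t+\tau}}\Bigr)\wb,\wb\!\right)\geq \mu\,\Bphi_{\tau}(X(t))\,B(\wb,\wb).
\end{equation*}

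There is essentially no hard step here; the only subtlety is ensuring that the a priori bound on the inter-agent distances $|x_i(s)-x_j(s)|$ uses only data available at time $t$, which is precisely why the auxiliary kernel $\Bphi_{\tau}(\cdot)$ is shifted by the worst-case displacement $\tau V(0)$ over a window of length $\tau$. This lemma essentially converts the purely graph-theoretic persistence assumption \eqref{eq:PE} into a state-dependent persistence estimate for the full nonlinear Laplacian $\Lb(\cdot,\xb(\cdot))$, with the state dependence entering only through the scalar factor $\Bphi_{\tau}(X(t))$; this is exactly the form needed to later couple a Ha--Liu-type dissipative analysis with the strict-Lyapunov construction of Section \ref{section:Consensus}.
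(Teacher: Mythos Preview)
Your proof is correct and follows essentially the same route as the paper's: both arguments expand $B(\Lb(s,\xb(s))\wb,\wb)$ via the symmetric-weight identity, bound $|x_i(s)-x_j(s)|\leq 2\sqrt{N}X(s)\leq 2\sqrt{N}(X(t)+\tau V(0))$ using the weak dissipation \eqref{eq:Weak_Dissipation}, apply the monotonicity of $\phi$ to pull out the constant $\Bphi_{\tau}(X(t))$, and then invoke \eqref{eq:PE}. The only cosmetic difference is that you derive the distance bound before writing the quadratic form while the paper does it in the reverse order, and you pass through $|x_i(s)|\leq \sqrt{N}X(s)$ via $\bar{\xb}\equiv 0$ whereas the paper uses $|x_i(s)-x_j(s)|\leq 2\sqrt{N}X(s)$ directly; neither affects the argument.
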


\begin{proof}
By the definition \eqref{eq:GraphLaplacian_Def1} of $\Lpazob : \R_+ \times (\R^d)^N \rightarrow \R^{dN \times dN}$, one has
\begin{equation}
\label{e-Lemma1}
\begin{aligned}
B \bigg( \Big( \tfrac{1}{\tau} \mathsmaller{\INTSeg{\Lpazob(s,\xb(s))}{s}{t}{t+\tau}} \Big) \wb,\wb \bigg) & = \frac{1}{2 N^2}\sum\limits_{i,j=1}^N \Big( \tfrac{1}{\tau} \mathsmaller{\INTSeg{\xi_{ij}(s) \phi(|x_i(s)-x_j(s)|)}{s}{t}{t+\tau}} \Big)|w_i-w_j|^2 \\
& \geq \frac{1}{2 N^2} \sum\limits_{i,j=1}^N \Big( \tfrac{1}{\tau} \mathsmaller{\INTSeg{\xi_{ij}(s) \phi(\sqrt{2} N X(s))}{s}{t}{t+\tau}} \Big)|w_i-w_j|^2, 
\end{aligned}
\end{equation}
since $\phi(\cdot)$ is non-increasing. As a consequence of the weak dissipation relations \eqref{eq:Weak_Dissipation}, it further holds 
\begin{equation*}
X(s) = X(t) + \INTSeg{\dot X(\sigma)}{\sigma}{t}{s} \leq X(t) + \tau V(0),
\end{equation*}
for all $s \in [t,t+\tau]$. By \eqref{e-Lemma1}, and using again that $\phi(\cdot)$ is non-increasing, we obtain
\begin{equation*}
\begin{aligned}
B \bigg( \Big( \tfrac{1}{\tau} \mathsmaller{\INTSeg{\Lpazob(s,\xb(s))}{s}{t}{t+\tau}} \Big) \wb,\wb \bigg) & \geq \frac{\phi \big(\sqrt{2}N (X(t)+\tau V(0)) \big)}{2 N^2} \sum\limits_{i,j=1}^N \Big( \tfrac{1}{\tau} \mathsmaller{\INTSeg{\xi_{ij}(s)}{s}{t}{t+\tau}} \Big)|w_i-w_j|^2 \\
& = \Bphi_{\tau}(X(t)) B \bigg( \Big( \tfrac{1}{\tau}  \mathsmaller{\INTSeg{\Lb_{\xi}(s)}{s}{t}{t+\tau}} \Big) \wb,\wb \bigg) \\
& \geq \mu \, \Bphi_{\tau}(X(t))  B \left(\wb,\wb \right),
\end{aligned}
\end{equation*}
where we used the definitions of $\Lb_{\xi}(\cdot)$ in \eqref{eq:GraphLaplacian_Def2} and $\Bphi_{\tau}(\cdot)$ in \eqref{eq:Bphi_Def}, as well as \eqref{eq:PE} in the last inequality. 
\end{proof}

Let $\psi_{\tau}(\cdot)$ be defined as in \eqref{eq:Psi_Def}, and consider the candidate Lyapunov function defined by 
\begin{equation}
\label{eq:Vcal_def}
\Vcal_{\tau}(t) := \lambda(t) V(t) + \sqrt{B(\psi_{\tau}(t)\vb(t),\vb(t))}, 
\end{equation}
for all times $t \geq 0$ , where $\lambda(\cdot)$ is a smooth tuning curve. In the following lemma, we establish a first differential decay estimate for $\Vcal_{\tau}(\cdot)$. 
 
\begin{lem}
\label{lem:Flocking_Estimate}
For every real number $\epsilon_0 > 0$, there exists a time horizon $T_{\epsilon_0} := 1/4\epsilon_0^2 > 0$ such that
\begin{equation}
\label{eq:Strong_Dissipation0}
\dot \Vcal_{\tau}(t) \leq - \frac{\mu \, \Bphi_{\tau}(X(t))}{2 \sqrt{(1+c^2)\tau}} V(t).
\end{equation}
for almost every times $t \in [0,2T_{\epsilon_0})$. 
\end{lem}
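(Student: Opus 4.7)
The strategy is to transpose the Lyapunov construction of Theorem \ref{thm:Consensus} to the velocity variable $\vb(\cdot)$, using Lemma \ref{lem:PE_Flocking} in place of the direct persistence bound \eqref{eq:PE}. The essential new feature compared to the consensus case is that the effective coercivity constant of the time-averaged Laplacian now carries the factor $\Bphi_\tau(X(t))$, which may degrade as $X(\cdot)$ grows, while the weak dissipation \eqref{eq:Weak_Dissipation} only provides the crude linear bound $X(t) \leq X(0)+t V(0)$. This forces the tuning curve $\lambda(\cdot)$ to vary in time, and restricts the strict-dissipation estimate to a finite horizon $T_{\epsilon_0}$ calibrated against the Young parameter $\epsilon_0$.

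First, I would observe that $\Norm{\Lb(t,\xb)}_B \leq \phi(0)$ uniformly in $(t,\xb)$, thanks to the monotonicity of $\phi(\cdot)$ and to $\xi_{ij}(\cdot) \in [0,1]$, so that the constant $c := \sqrt{\phi(0)}$ can be fixed once and for all in \eqref{eq:Psi_Def} and the framing \eqref{eq:PsiBounds} carries over verbatim to the velocity variable. Differentiating $\Vcal_\tau(\cdot)$ along a solution of \eqref{eq:CSM2} with $\dot\vb = -\Lb(t,\xb)\vb$, $\dot V = -B(\Lb\vb,\vb)/V$ and \eqref{eq:PsiDerivative}, I would obtain the exact velocity counterpart of \eqref{eq:ConsensusEstimate1}, augmented by the additional term $\dot\lambda(t) V(t)$ arising from the time-dependence of $\lambda$: namely an averaged-Laplacian contribution, a sign-indeterminate $B(\Lb(t,\xb(t))\vb(t),\vb(t))$ term whose coefficient depends on $\lambda(t)$, and a cross term involving the iterated integral of $\Lb(\cdot,\xb(\cdot))$.

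Next, I would apply Lemma \ref{lem:PE_Flocking} to the averaged-Laplacian term, which yields precisely $-\tfrac{\mu\,\Bphi_\tau(X(t))}{2\sqrt{(1+c^2)\tau}}V(t)$, and estimate the cross term via the Cauchy-Schwarz/Young scheme of \eqref{eq:ConsensusEstimate3} with the prescribed parameter $\epsilon_0$, producing a non-negative residue of the form $\tfrac{c^3\sqrt{\tau}\epsilon_0}{2}V(t) + \tfrac{c^3\sqrt{\tau}}{2\epsilon_0\, V(t)} B(\Lb(t,\xb(t))\vb(t),\vb(t))$. The key calibration is then to select a \emph{linearly decreasing} tuning curve
\begin{equation*}
\lambda(t) \, := \, \lambda_0 - \tfrac{c^3\sqrt{\tau}\,\epsilon_0}{2}\, t, \qquad \lambda_0 := \tfrac{1}{2\sqrt{\tau}} + \tfrac{c^3\sqrt{\tau}}{2\epsilon_0} + c^3\sqrt{\tau}\,\epsilon_0\, T_{\epsilon_0},
\end{equation*}
for a horizon $T_{\epsilon_0} > 0$ that can in fact be fixed arbitrarily at this stage. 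This choice ensures simultaneously: (i) $\lambda(t) \geq \tfrac{1}{2\sqrt{\tau}} + \tfrac{c^3\sqrt{\tau}}{2\epsilon_0}$ on $[0, 2T_{\epsilon_0})$, which makes the coefficient multiplying $B(\Lb(t,\xb(t))\vb(t),\vb(t))$ non-positive and allows one to discard this term by Proposition \ref{prop:SemiPositiveL}; and (ii) the constant derivative $\dot\lambda(t) = -\tfrac{c^3\sqrt{\tau}\,\epsilon_0}{2}$ exactly cancels the Young residue $\tfrac{c^3\sqrt{\tau}\,\epsilon_0}{2}V(t)$ through the $\dot\lambda(t) V(t)$ term. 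Together, these two cancellations leave only the persistence dissipation, i.e. the bound claimed in the lemma.

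\textbf{Main obstacle.} The most delicate point is to realise that a \emph{constant} tuning $\lambda$ is unable to absorb the Young residue $\tfrac{c^3\sqrt{\tau}\epsilon_0}{2}V(t)$ without degrading the coefficient $\tfrac{1}{2}$ appearing in the persistence rate, whereas a linearly decreasing $\lambda(t)$ with slope $-\tfrac{c^3\sqrt{\tau}\epsilon_0}{2}$ provides exactly the negative input $\dot\lambda(t) V(t)$ required for a clean cancellation. The horizon $2T_{\epsilon_0}$ is then dictated by the largest time for which $\lambda(t)$ still exceeds the threshold $\tfrac{1}{2\sqrt{\tau}} + \tfrac{c^3\sqrt{\tau}}{2\epsilon_0}$ needed to control the $B(\Lb(t,\xb(t))\vb(t),\vb(t))$ coefficient, and its dependence on $\epsilon_0$ is what will subsequently allow a bootstrap on the growth of $X(\cdot)$ in the proof of Theorem \ref{thm:Flocking}.
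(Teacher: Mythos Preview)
Your argument is correct and reaches the same conclusion as the paper, but the calibration of the tuning functions is genuinely different. The paper keeps the Young parameter \emph{time-dependent}: it sets $\lambda(t) = \tfrac{1}{2\sqrt{\tau}} + \tfrac{c^3\sqrt{\tau}}{2\epsilon(t)}$ so that the coefficient of $B(\Lb(t,\xb(t))\vb(t),\vb(t))$ vanishes \emph{identically}, and then lets $\epsilon(\cdot)$ solve the scalar ODE $\dot\epsilon = \epsilon^3$, $\epsilon(0)=\epsilon_0$, which forces $\dot\lambda(t) = -\tfrac{c^3\sqrt{\tau}}{2}\epsilon(t)$ and cancels the Young residue exactly. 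The finite horizon $2T_{\epsilon_0} = 1/(2\epsilon_0^2)$ then appears as the blow-up time of $\epsilon(\cdot)$, giving the specific value $T_{\epsilon_0} = 1/(4\epsilon_0^2)$ that is exploited downstream in Lemma~\ref{lem:Strong_LocalFlocking} and in the final reparametrisation $T := T_{\epsilon_0}$.

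Your route is more elementary: you freeze $\epsilon \equiv \epsilon_0$, let $\lambda(\cdot)$ decrease linearly, and discard the $B(\Lb\vb,\vb)$ term by sign rather than by exact cancellation. This avoids solving an ODE and makes the mechanism transparent. The trade-off is that in your version $T_{\epsilon_0}$ is a free parameter rather than an intrinsic blow-up time, and $\lambda_0$ depends on it; to feed into the subsequent lemmas you would still need to pick $T_{\epsilon_0} \sim \epsilon_0^{-2}$ (e.g.\ $T_{\epsilon_0}=1/(4\epsilon_0^2)$), at which point $\epsilon_0 T_{\epsilon_0} \sim 1/\epsilon_0$ and your $\lambda(t)$ obeys the same two-sided bounds in $1/\epsilon_0$ as the paper's, so the framing \eqref{eq:Vcal_Framing} and the rest of the argument go through unchanged. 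Your remark that $T_{\epsilon_0}$ ``can be fixed arbitrarily at this stage'' is literally true for the lemma but slightly understates the point: the later bootstrap needs precisely this quadratic scaling, so it would be cleaner to commit to it here.
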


\begin{proof}
As in Section \ref{section:Consensus}, we can assume without loss of generality that $\vb^0 \notin \Ccal$, the other scenario being trivial. Remark that in our context, the constant $c >0$ defined as in \eqref{eq:c_def} is finite since $\phi(\cdot)$ is positive non-increasing and thus bounded from above over $\R_+$. By adapting the arguments of the proof of Theorem \ref{thm:Consensus} above while using the results of Lemma \ref{lem:PE_Flocking}, we can estimate the time-derivative of $\Vcal_{\tau}(\cdot)$ as 
\begin{equation}
\label{eq:Strong_Dissipation1}
\begin{aligned}
\dot \Vcal_{\tau}(t) \leq ~ &  - \left( \frac{\mu \, \Bphi_{\tau}(X(t))}{2\sqrt{(1+c^2)\tau}} - \frac{c^3 \sqrt{\tau}}{2} \epsilon(t) - \dot \lambda(t)\right) V(t) \\
& + \frac{1}{V(t)} \left( \frac{1}{2 \sqrt{\tau}} + \frac{c^3 \sqrt{\tau}}{2 \epsilon(t)} - \lambda(t) \right) B(\Lpazob(t,\xb(t))\vb(t),\vb(t)).  
\end{aligned}
\end{equation}
The main difference with respect to the analysis conveyed in Section \ref{section:Consensus} lies in the choice of \textit{time-dependent} families of parameters $(\lambda(\cdot),\epsilon(\cdot))$. This modification is needed because $X(\cdot)$ may be unbounded along solutions of \eqref{eq:CS2}, so that the map $t \in \R_+ \mapsto \Bphi_{\tau}(X(t)) \in \R_+^*$ is not uniformly bounded from below by a positive constant any more. 

Given an arbitrary $T_{\epsilon}' > 0$ and a differentiable curve $t \in [0,T_{\epsilon}') \mapsto \epsilon(t) \in \R_+^*$, we define $\lambda(\cdot)$ as
\begin{equation}
\label{eq:Lambda_def}
\lambda(t) := \frac{1}{2 \sqrt{\tau}} + \frac{c^3 \sqrt{\tau}}{2 \epsilon(t)}.
\end{equation}
for all times $t \in [0,T_{\epsilon}')$, which implies in particular that $\dot \lambda(t) = - \tfrac{c^3 \sqrt{\tau}}{2 \epsilon^2(t)} \dot \epsilon(t)$. Let us now choose the curve $\epsilon(\cdot)$ as a solution of the ordinary differential equation
\begin{equation*}
\dot \epsilon(t) = \epsilon^3(t), \qquad \epsilon(0) = \epsilon_0,  
\end{equation*}
for a given constant $\epsilon_0 > 0$. The latter is uniquely determined, and can be written explicitly as 
\begin{equation}
\label{eq:epsilon_def}
\epsilon(t) = \frac{\epsilon_0}{\sqrt{1-2\epsilon_0^2 t}},
\end{equation}
for any $t \in [0,\tfrac{1}{2\epsilon_0^2})$. Plugging the analytical expressions of these curves $(\lambda(\cdot),\epsilon(\cdot))$ in \eqref{eq:Strong_Dissipation1} then yields 
\begin{equation*}
\dot \Vcal_{\tau}(t) \leq - \frac{\mu \, \Bphi_{\tau}(X(t))}{2\sqrt{(1+c^2)\tau}}  V(t), 
\end{equation*}
for almost every $t \in [0,1/2\epsilon_0^2)$, so that \eqref{eq:Strong_Dissipation0} holds with  $T_{\epsilon_0} := 1/4 \epsilon_0^2$. 
\end{proof}

Observe that \eqref{eq:Strong_Dissipation0} involves both the standard deviation $V(\cdot)$ and the Lyapunov functional $\Vcal_{\tau}(\cdot)$. However in order to prove Theorem \ref{thm:Flocking}, we will need estimates which solely involve $V(\cdot)$.

\begin{lem}
\label{lem:Strong_LocalFlocking}
There exists a mapping $\epsilon_0 \in \R_+^* \mapsto X_M(\epsilon_0) \in \R_+$ such that $X(t) \leq X_M(\epsilon_0)$ for all $t \in [0,T_{\epsilon_0}]$. In particular for every $\epsilon_0 >0$, the following local strictly-dissipative inequality holds
\begin{equation}
\label{eq:Final}
V(T_{\epsilon_0}) \leq \left( \tfrac{a_1 + b_1 \epsilon_0}{a_2 + b_2 \epsilon_0} \right) V(0) \exp \left( -\frac{\mu \, \Bphi_{\tau}(X_M(\epsilon_0))}{4(a_3 + b_3 \epsilon_0) \epsilon_0} \right),
\end{equation} 
where $\{a_k,b_k\}_{k=1}^3$ are positive constants which only depend on $(c,\tau)$.   
\end{lem}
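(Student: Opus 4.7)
My plan has two stages. First, the uniform bound $X(t) \leq X_M(\epsilon_0)$ will follow directly from the weak dissipation \eqref{eq:Weak_Dissipation}: since $\dot V \leq 0$, one has $V(t) \leq V(0)$ throughout, whence $\dot X \leq V(0)$ integrates to $X(t) \leq X(0) + tV(0)$. Evaluating at $t = T_{\epsilon_0} = 1/(4\epsilon_0^2)$ identifies $X_M(\epsilon_0) := X(0) + V(0)/(4\epsilon_0^2)$, which establishes the first claim.

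For the exponential bound on $V(T_{\epsilon_0})$, the core idea is to turn the mixed estimate \eqref{eq:Strong_Dissipation0} into a closed differential inequality on $\Vcal_\tau$. Since $\phi$ is non-increasing, so is $\Bphi_\tau$, so the $X$-bound from the first stage gives $\Bphi_\tau(X(t)) \geq \Bphi_\tau(X_M(\epsilon_0))$ on $[0,T_{\epsilon_0}]$. Combining this with the upper sandwich bound $\Vcal_\tau(t) \leq (\lambda(t) + \sqrt{(1+c^2)\tau})V(t)$, which is the analogue of \eqref{eq:XcalBound} applied to the velocity variable, one upgrades \eqref{eq:Strong_Dissipation0} to
$$\dot \Vcal_\tau(t) \leq - \frac{\mu \, \Bphi_\tau(X_M(\epsilon_0))}{2\sqrt{(1+c^2)\tau}\,\bigl(\lambda(t) + \sqrt{(1+c^2)\tau}\bigr)} \, \Vcal_\tau(t).$$
Because $t \mapsto \epsilon(t)$ is strictly increasing on $[0,T_{\epsilon_0}]$ by \eqref{eq:epsilon_def}, $\lambda(\cdot)$ is strictly decreasing, so the denominator is maximised at $t = 0$ and can be replaced by the constant $\lambda(0) + \sqrt{(1+c^2)\tau}$ in the right-hand side.

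Applying Gr\"onwall's lemma on $[0,T_{\epsilon_0}]$ and then sandwiching $\Vcal_\tau$ from below at $t = T_{\epsilon_0}$ by $(\lambda(T_{\epsilon_0}) + \sqrt{\tau})V(T_{\epsilon_0})$ and from above at $t = 0$ by $(\lambda(0) + \sqrt{(1+c^2)\tau})V(0)$ yields
$$V(T_{\epsilon_0}) \leq \frac{\lambda(0) + \sqrt{(1+c^2)\tau}}{\lambda(T_{\epsilon_0}) + \sqrt{\tau}} \, V(0) \, \exp\!\left(-\frac{\mu \, \Bphi_\tau(X_M(\epsilon_0)) \, T_{\epsilon_0}}{2\sqrt{(1+c^2)\tau}\bigl(\lambda(0) + \sqrt{(1+c^2)\tau}\bigr)}\right).$$
The final step is pure bookkeeping. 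Substituting $\lambda(0) = \tfrac{1}{2\sqrt{\tau}} + \tfrac{c^3\sqrt{\tau}}{2\epsilon_0}$ and, using $\epsilon(T_{\epsilon_0}) = \sqrt{2}\epsilon_0$, $\lambda(T_{\epsilon_0}) = \tfrac{1}{2\sqrt{\tau}} + \tfrac{c^3\sqrt{\tau}}{2\sqrt{2}\,\epsilon_0}$, and then multiplying numerator and denominator by $\epsilon_0$ recasts each of the prefactors $\lambda(0) + \sqrt{(1+c^2)\tau}$ and $\lambda(T_{\epsilon_0}) + \sqrt{\tau}$ in the affine form $(\alpha_k + \beta_k \epsilon_0)/\epsilon_0$ with constants $\alpha_k,\beta_k > 0$ depending only on $(c,\tau)$; the $\epsilon_0$-factors cancel in the ratio and give $(\alpha_1 + \beta_1\epsilon_0)/(\alpha_2 + \beta_2\epsilon_0)$. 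Likewise, using $\epsilon_0 T_{\epsilon_0} = 1/(4\epsilon_0)$, the exponent reduces to $\mu \Bphi_\tau(X_M(\epsilon_0))/[4(\alpha_3 + \beta_3\epsilon_0)\epsilon_0]$, matching the statement.

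The main obstacle is not analytic but algebraic: one must keep track of several nested constants so that the final scaling in $\epsilon_0$ comes out exactly as $(\alpha_k + \beta_k \epsilon_0)$ with a clean $1/\epsilon_0$ in the exponent (rather than, say, $1/\epsilon_0^2$). Once the sandwich bounds on $\Vcal_\tau$ and the monotonicity of $\lambda(\cdot)$, $\Bphi_\tau(\cdot)$ are combined, the Gr\"onwall step and the constant identification are essentially forced.
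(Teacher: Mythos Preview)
Your proof is correct, and the Gr\"onwall step together with the endpoint sandwiching of $\Vcal_\tau$ is essentially what the paper does (the paper writes the sandwich uniformly on $[0,T_{\epsilon_0}]$ using $\epsilon(t)\in[\epsilon_0,\sqrt{2}\epsilon_0]$, which amounts to your use of $\lambda(0)$ and $\lambda(T_{\epsilon_0})$ at the endpoints, and the identification $(\alpha_3,\beta_3)=2\sqrt{(1+c^2)\tau}(\alpha_1,\beta_1)$ is identical).

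The genuine difference is how $X_M(\epsilon_0)$ is produced. You take the crude bound $X(t)\leq X(0)+tV(0)$ from $\dot X\leq V\leq V(0)$ and set $X_M(\epsilon_0)=X(0)+V(0)/(4\epsilon_0^2)$. The paper instead integrates \eqref{eq:Strong_Dissipation0}, uses $\dot X\leq V$ as a change of variable $r=X(s)$ to obtain
\[
V(t)\ \leq\ \Big(\tfrac{\alpha_1+\beta_1\epsilon_0}{\alpha_2+\beta_2\epsilon_0}\Big)V(0)\ -\ \tfrac{\mu\epsilon_0}{\alpha_2'+\beta_2'\epsilon_0}\,\BPhi_\tau(X(t)),
\]
and then invokes $V\geq 0$ to force $X(t)\leq X_M(\epsilon_0):=\BPhi_\tau^{-1}\big(C(\epsilon_0)V(0)\big)$; this is the Ha--Liu mechanism transplanted inside the Lyapunov construction. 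The paper's $X_M$ is sharper: for small $\epsilon_0$ it scales like $\epsilon_0^{-1/(1-\beta)}$ rather than your $\epsilon_0^{-2}$, so $\Bphi_\tau(X_M)$ is larger and the exponent in \eqref{eq:Final} decays like $T_{\epsilon_0}^{(1-2\beta)/(2(1-\beta))}$ instead of your $T_{\epsilon_0}^{(1-2\beta)/2}$. Both still give the same threshold $\beta<\tfrac12$ in Theorem~\ref{thm:Flocking}. Be aware, though, that the proof of Theorem~\ref{thm:Flocking} quotes the explicit formula \eqref{eq:Critical_Radius} for $X_M$; with your choice that downstream computation must be redone (it becomes shorter, since no inversion of $\BPhi_\tau$ is needed).
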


\begin{proof}
Choose $\epsilon_0 > 0$ and denote by $(\lambda(\cdot),\epsilon(\cdot))$ the corresponding tuning functions given respectively by \eqref{eq:Lambda_def} and \eqref{eq:epsilon_def}. Similarly to \eqref{eq:PsiBounds}, one has for any solution $(\xb(\cdot),\vb(\cdot))$ of \eqref{eq:CSM2} that 
\begin{equation*}
\sqrt{\tau} V(t) \leq \sqrt{B(\psi_{\tau}(t)\vb(t),\vb(t))} \leq \sqrt{(1+c^2)\tau} V(t),
\end{equation*}
for all times $t \in [0,T_{\epsilon_0}]$. By the definition of $\Vcal_{\tau}(\cdot)$ given in \eqref{eq:Vcal_def} along with our choice of parameter curves $(\lambda(\cdot),\epsilon(\cdot))$, it then holds
\begin{equation*}
\left( \sqrt{\tau} + \tfrac{1}{2 \sqrt{\tau}} + \tfrac{c^3 \sqrt{2\tau}}{4 \epsilon_0} \right)V(t) \leq \Vcal_{\tau}(t) \leq \left( \sqrt{(1+c^2)\tau} +  \tfrac{1}{2 \sqrt{\tau}} + \tfrac{c^3 \sqrt{\tau}}{2 \epsilon_0} \right) V(t),
\end{equation*}
for any $t \in [0,T_{\epsilon_0}]$, where we used the fact that $\epsilon(t) \in [\epsilon_0,\sqrt{2}\epsilon_0]$ on this time interval. By a simple identification of the coefficients, these estimates can be rewritten in the condensed form
\begin{equation}
\label{eq:Vcal_Framing}
\left( \tfrac{a_2}{\epsilon_0} + b_2 \right) V(t) \leq \Vcal_{\tau}(t) \leq \left( \tfrac{a_1}{\epsilon_0} + b_1 \right) V(t), 
\end{equation}
for some constants $\{a_k,b_k\}_{k=1}^2$ depending only on $(c,\tau)$. Now by integrating \eqref{eq:Strong_Dissipation0} on $[0,t]$, we obtain
\begin{equation*}
\begin{aligned}
\Vcal_{\tau}(t) & \leq \Vcal_{\tau}(0)- \frac{\mu}{2 \sqrt{(1+c^2)\tau}} \INTSeg{\Bphi_{\tau}(X(s)) V(s)}{s}{0}{t},
\end{aligned}
\end{equation*}
which together with \eqref{eq:Vcal_Framing} in turn yields 
\begin{equation}
\label{eq:Flocking_Estimate1}
\begin{aligned}
V(t) & \leq \left( \tfrac{a_1 + b_1 \epsilon_0}{a_2 + b_2 \epsilon_0} \right) V(0) - \frac{\mu \epsilon_0}{a_2' + b_2' \epsilon_0} \INTSeg{\Bphi_{\tau}(X(s)) V(s)}{s}{0}{t},
\end{aligned}
\end{equation}
where $(a_2',b_2') := 2 \sqrt{(1+c^2)\tau}(a_2,b_2)$. Recall now that $\dot X(s) \leq V(s)$ by \eqref{eq:Weak_Dissipation}, so that applying the change of variable $r = X(s)$ in \eqref{eq:Flocking_Estimate1}, we recover the integral estimate
\begin{equation}
\label{eq:Flocking_Estimate2}
V(t) \leq \left( \tfrac{a_1 + b_1 \epsilon_0}{a_2 + b_2 \epsilon_0} \right) V(0) - \frac{\mu \epsilon_0}{a_2' + b_2' \epsilon_0} \INTSeg{\Bphi_{\tau}(r)}{r}{X(0)}{X(t)} ~=~ \left( \tfrac{a_1 + b_1 \epsilon_0}{a_2 + b_2 \epsilon_0} \right) V(0) - \frac{\mu \epsilon_0}{a_2' + b_2' \epsilon_0} \BPhi_{\tau}(X(t)), 
\end{equation}
for all times $t \in [0,T_{\epsilon_0}]$.

Since $\Bphi_{\tau} \notin L^1(\R_+,\R_+^*)$, its primitive $\BPhi_{\tau}(\cdot)$ is a strictly increasing map which image continuously spans $\R_+$. It is therefore invertible, and for any $\epsilon_0 > 0$ there exists a radius $X_M(\epsilon_0) >0$ such that
\begin{equation}
\label{eq:Critical_Radius}
X_M(\epsilon_0) = \BPhi_{\tau}^{-1}\left( \frac{2 (a_1 + b_1 \epsilon_0) \sqrt{\big(1+c^2 \big) \tau}}{\mu \epsilon_0}  V(0) \right), 
\end{equation}
or equivalently
\begin{equation}
\label{eq:Critical_RadiusBis}
\frac{\mu \epsilon_0}{a_2' + b_2' \epsilon_0} \BPhi_{\tau}(X_M(\epsilon_0)) = \left( \tfrac{a_1 + b_1 \epsilon_0}{a_2 + b_2 \epsilon_0} \right) V(0).
\end{equation}
Since $\BPhi_{\tau}(\cdot)$ is increasing and $V(\cdot)$ is a non-negative quantity by definition, it necessarily follows by plugging \eqref{eq:Critical_RadiusBis} into \eqref{eq:Flocking_Estimate2} that $X(t) \leq X_M(\epsilon_0)$ on $[0,T_{\epsilon_0}]$. Going back to \eqref{eq:Strong_Dissipation0} combined with \eqref{eq:Vcal_Framing}, we can again use the fact that $\Bphi_{\tau}(\cdot)$ is non-increasing to obtain 
\begin{equation*}
\dot \Vcal_{\tau}(t) \leq - \frac{\mu \epsilon_0 \, \Bphi_{\tau}(X_M(\epsilon_0))}{(a_3 + b_3 \epsilon_0)} \Vcal_{\tau}(t),
\end{equation*}
for almost every $t \in [0,T_{\epsilon_0}]$, where $(a_3,b_3) := \mathsmaller{2 \sqrt{(1+c^2)\tau}}(a_1,b_1)$. By an application of Gr\"onwall's Lemma to $\Vcal_{\tau}(\cdot)$ along with yet another use of \eqref{eq:Vcal_Framing}, we finally recover the decay estimate 
\begin{equation*}
V(T_{\epsilon_0}) \leq \left( \tfrac{a_1 + b_1 \epsilon_0}{a_2 + b_2 \epsilon_0} \right) V(0) \exp \left( -\frac{\mu \, \Bphi_{\tau}(X_M(\epsilon_0))}{4(a_3 + b_3 \epsilon_0) \epsilon_0} \right), 
\end{equation*}
where we used the fact that $T_{\epsilon_0} = 1/4\epsilon_0^2$. 
\end{proof}

Building on the dissipative inequality \eqref{eq:Final} obtained in Lemma \ref{lem:Strong_LocalFlocking}, we can in turn recover an upper-bound on the standard deviation $X(\cdot)$ that is uniform with respect to the parameter $\epsilon_0 > 0$. 

\begin{prop}
\label{prop:Bound}
There exists a uniform radius $\bar{X}_M > 0$ such that $X(t) \leq \bar{X}_M$ for all times $t \geq 0$. 
\end{prop}

\begin{proof}
Using the analytical expression \eqref{eq:Critical_Radius} of $X_M(\epsilon_0)$, we have 
\begin{equation*}
\Bphi_{\tau}(X_M(\epsilon_0)) = \Bphi_{\tau} \circ \BPhi_{\tau}^{-1} \left( A_1 + \frac{A_2}{\epsilon_0} \right), 
\end{equation*}
where $A_1,A_2 >0$ are given constants which depend on $(c,\tau,\mu)$. On the other hand by integrating \eqref{eq:Assumptions_Phi} with respect to $r \in [X(0),X]$ for some $X \geq X(0)$, it also holds 
\begin{equation*}
\Phi \big( X \big) \geq \tfrac{K}{1-\beta} \Big( \big( \sigma + X \big)^{1-\beta} - \big( \sigma + X(0) \big)^{1-\beta}\Big),
\end{equation*}
which can be reformulated as
\begin{equation}
\label{eq:X_Est1}
X \leq \Big( \tfrac{1-\beta}{K} \Phi(X) + (\sigma + X(0))^{1-\beta} \Big)^{\tfrac{1}{1-\beta}} - \sigma, 
\end{equation}
for every $X \geq X(0)$. It can be shown by performing a change of variable in \eqref{eq:BPhi_Def} that $\Phi(X) \leq A_3 \, \BPhi_{\tau}(X) + A_4$ for given constants $A_3,A_4 > 0$ depending only on $(V(0),N,\tau)$, so that choosing $X := X_M(\epsilon_0) = \BPhi_{\tau}^{-1} \big( A_1 + A_2/\epsilon_0 \big)$ and recalling that $\Bphi_{\tau}(\cdot)$ is non-increasing, we obtain as a consequence of \eqref{eq:X_Est1} together with hypothesis \ref{hyp:K} that
\begin{equation}
\label{eq:X_Est2}
\Bphi_{\tau} (X_M(\epsilon_0)) \geq \Bphi_{\tau} \bigg( \Big( C_1 + \tfrac{C_2}{\epsilon_0} \Big)^{\tfrac{1}{1-\beta}} - \sigma \bigg) \geq K \Big( C_1 + \tfrac{C_2}{\epsilon_0} \Big)^{\tfrac{\beta}{\beta-1}},
\end{equation}
where $C_1,C_2> 0$ only depend on $(X(0),V(0),N,\sigma,K,c,\tau,\mu)$. Plugging the expression derived in \eqref{eq:X_Est2} into \eqref{eq:Final} while recalling that $T_{\epsilon_0} = 1/4\epsilon_0^2$, we finally recover 
\begin{equation}
\label{eq:Asymptotic_V}
V(T_{\epsilon_0}) \leq C_3 \exp \bigg( -C_4 \, \mu T_{\epsilon_0}^{\tfrac{1-2\beta}{2(1-\beta)}} \bigg),
\end{equation}
for every $\epsilon_0 > 0$, where $C_3,C_4 > 0$ are constants depending only on $(X(0),V(0),N,\sigma,K,c,\tau,\mu)$. 

Observe now that since $\epsilon_0 > 0$ is a free parameter and $\epsilon_0 \in \R_+^* \mapsto T_{\epsilon_0} \in \R_+^*$ continuously spans the whole of $\R_+^*$, we can define a time reparametrisation using $T := T_{\epsilon_0}$. Then by \eqref{eq:Asymptotic_V}, the weak-dissipativity \eqref{eq:Weak_Dissipation} of \eqref{eq:CSM2} expressed in terms of this new time variable writes
\begin{equation*}
\begin{aligned}
\sup_{T \geq 0} X(T) & \leq X(0) + \INTSeg{V(T)}{T}{0}{+\infty} \\
& \leq X(0) + \INTSeg{C_3 \exp \bigg( -C_4 \, \mu T^{\tfrac{1-2\beta}{2(1-\beta)}} \bigg)}{T}{0}{+\infty} < +\infty,
\end{aligned}
\end{equation*}
as we assumed in \ref{hyp:K} that $\beta \in (0,\tfrac{1}{2})$. Thus, there exists a constant $\bar{X}_M > 0$ such that $X(t) \leq \bar{X}_M$ for all times $t \geq 0$, which concludes the proof of our claim.  
\end{proof}

Building on the uniform estimate derived in Proposition \ref{prop:Bound}, we prove our main result Theorem \ref{thm:Flocking}. 

\begin{proof}[Proof of Theorem \ref{thm:Flocking}]
Since we have shown in Proposition \ref{prop:Bound} that $X(\cdot)$ is uniformly bounded, the non-uniform exponential convergence of $V(\cdot)$ towards 0 can be obtained by simply repeating the arguments developed in Section \ref{section:Consensus} for consensus problems. Indeed because $\phi(\cdot)$ is non-increasing, it holds that $\phi(\sqrt{2}NX(t)) \geq \phi(\sqrt{2}N\bar{X}_M)$ for all times $t \geq 0$. Whence, defining the constants 
\begin{equation*}
\epsilon_M := \frac{\mu \, \phi(\sqrt{2}N\bar{X}_M)}{2 c^3 \tau \sqrt{(1+c^2)}} \qquad \text{and} \qquad \lambda_M := \frac{1}{2\sqrt{\tau}} + \frac{c^3 \sqrt{\tau}}{2 \epsilon_M}, 
\end{equation*}
and repeating the estimates detailed in the proof of Theorem \ref{thm:Consensus} above for the functional 
\begin{equation*}
\Vcal_{\tau,M}(t) := \lambda_M V(t) + \sqrt{B(\psi_{\tau}(t) \vb(t),\vb(t))},
\end{equation*}
with $\psi_{\tau}(\cdot)$ being given as in \eqref{eq:Psi_Def}, we recover for almost all times $t \geq 0$ the uniform decay estimate
\begin{equation}
\label{eq:ExpLambdaM}
\dot{\Vcal}_{\tau,M}(t) \leq - \frac{\mu \, \phi(\sqrt{2}N\bar{X}_M)}{4 \sqrt{(1+c^2)\tau} \big( \lambda_M + \sqrt{(1+c^2)\tau} \big)} \Vcal_{\tau,M}(t). 
\end{equation}
By applying Gr\"onwall's Lemma while observing that for all times $t \geq 0$, it holds
\begin{equation*}
(\lambda_M + \sqrt{\tau}) V(t) \leq \Vcal_{\tau,M}(t) \leq \Big( \lambda_M + \sqrt{(1+c^2)\tau} \Big) V(t)
\end{equation*}
we can finally conclude that
\begin{equation*}
X(t) \leq \bar{X}_M \qquad \text{and} \qquad V(t) \leq \alpha_M V(0) e^{-\gamma_M t}, 
\end{equation*}
for all times $t \geq 0$, where $\alpha_M,\gamma_M > 0$ are given by 
\begin{equation}
\label{eq:ConstantDef2}
\alpha_M := \Big( \tfrac{\lambda_M + \sqrt{(1+c^2)\tau}}{\lambda_M + \sqrt{\tau}} \Big) \qquad \text{and} \qquad \gamma_M := \frac{\mu \phi(\sqrt{2}N \bar{X}_M)}{4 \sqrt{(1+c^2)\tau} \big(\lambda_M + \sqrt{(1+c^2)\tau} \big)}, 
\end{equation}
with $\lambda_M > 0$ as in \eqref{eq:ExpLambdaM}. By definition \eqref{eq:StandardDev2} of $X(\cdot),V(\cdot)$, we conclude that $(\xb(\cdot),\vb(\cdot))$ converges to flocking with a non-uniform exponential rate in the velocity variable.
\end{proof}


\section{Illustration of the persistence condition}
\label{section:examp}

In this section, we exhibit a general situation in which \eqref{eq:PE} holds. We start by fixing a constant $\mu \in (0,1]$ and recalling known facts about graph-Laplacians, for which we refer the reader e.g. to \cite{Egerstedt2010}. 

\begin{Def}
\label{def:AlgebraicConnect}
The \textnormal{algebraic connectivity} of a graph with weights $(\xi_{ij})$ is the smallest non-zero eigenvalue of $\Lbx$ seen as an $N \times N$ matrix, and is denoted by $\lambda_2(\Lbx)$.
\end{Def}

\begin{lem}
If an interaction graph with weights $(\xi_{ij})$ is such that $\lambda_2(\Lbx) \geq \mu$, then  
\begin{equation*}
B \big( \Lbx \vb , \vb \big) \geq \mu B(\vb,\vb), 
\end{equation*}
for any $\vb \in (\R^d)^N$. 
\end{lem}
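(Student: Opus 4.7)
The plan is to reduce the inequality to a scalar problem on $\R^N$ and then apply the variational characterisation of eigenvalues of a symmetric matrix.

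First, I would exploit the fact that both $\Lbx$ and the variance form $B(\cdot,\cdot)$ act slicewise on the Euclidean coordinates of the agents. Writing $v_i=(v_i^{(1)},\dots,v_i^{(d)})$ and collecting the $\alpha$-th components into $\vb^{(\alpha)}=(v_1^{(\alpha)},\dots,v_N^{(\alpha)})\in\R^N$, the definition \eqref{eq:GraphLaplacian_Def2} gives $(\Lbx\vb)^{(\alpha)}=\Lbx\vb^{(\alpha)}$, where on the right-hand side $\Lbx$ is viewed as the $N\times N$ matrix appearing in Definition \ref{def:AlgebraicConnect}. Inserting this into Definition \ref{def:Variance} and expanding $\langle v_i,v_j\rangle=\sum_\alpha v_i^{(\alpha)}v_j^{(\alpha)}$ yields
\begin{equation*}
B(\Lbx\vb,\vb)=\sum_{\alpha=1}^{d}B(\Lbx\vb^{(\alpha)},\vb^{(\alpha)}),\qquad B(\vb,\vb)=\sum_{\alpha=1}^{d}B(\vb^{(\alpha)},\vb^{(\alpha)}),
\end{equation*}
so it suffices to establish the inequality for scalar-valued agents ($d=1$).

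Second, in the scalar case I would recast both sides as quadratic forms on $\R^N$. Let $\mathbf{1}\in\R^N$ denote the all-ones vector. The symmetry of the weights $(\xi_{ij})$ makes $\Lbx$ a symmetric matrix that annihilates $\mathbf{1}$, and Proposition \ref{prop:SemiPositiveL} ensures that $\Lbx\vb$ has zero mean. Consequently
\begin{equation*}
B(\Lbx\vb,\vb)=\tfrac{1}{N}\langle\Lbx\vb,\vb\rangle_{\R^N}=\tfrac{1}{N}(\vb-\bar\vb\mathbf{1})^{\top}\Lbx(\vb-\bar\vb\mathbf{1}),\qquad B(\vb,\vb)=\tfrac{1}{N}\lVert\vb-\bar\vb\mathbf{1}\rVert_{\R^N}^{2},
\end{equation*}
where the second equality in the first identity uses $\Lbx\mathbf{1}=0$ together with the symmetry of $\Lbx$ (expanding $\vb=\wb+\bar\vb\mathbf{1}$ with $\wb:=\vb-\bar\vb\mathbf{1}$ and observing that all cross terms vanish). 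Both sides thus depend only on the orthogonal projection $\wb$ of $\vb$ onto $\mathrm{span}(\mathbf{1})^{\perp}$.

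Third, I would invoke the spectral theorem. Under the hypothesis $\lambda_2(\Lbx)\geq\mu>0$ the kernel of the symmetric positive semi-definite matrix $\Lbx$ reduces to $\mathrm{span}(\mathbf{1})$, so $\mathrm{span}(\mathbf{1})^{\perp}$ is spanned by eigenvectors of $\Lbx$ associated to eigenvalues at least $\mu$. Since $\wb\in\mathrm{span}(\mathbf{1})^{\perp}$, the Courant-Fischer principle yields $\wb^{\top}\Lbx\wb\geq\mu\lVert\wb\rVert_{\R^N}^{2}$. Dividing by $N$ and summing back over the Euclidean coordinates concludes the proof. There is essentially no obstacle here: the statement is a one-line consequence of the spectral theorem once the coordinate decomposition and the projection onto $\mathrm{span}(\mathbf{1})^{\perp}$ have been made. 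The only bookkeeping is to track the $\tfrac{1}{N}$ normalisation coming from \eqref{eq:GraphLaplacian_Def2} consistently on both sides, and to record that $\lambda_2(\Lbx)\geq\mu>0$ implicitly forces connectedness of the underlying interaction graph.
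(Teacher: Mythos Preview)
Your proof is correct and follows essentially the same route as the paper's: both rely on the variational characterisation of $\lambda_2(\Lbx)$ applied on $\mathrm{span}(\mathbf{1})^{\perp}$, together with the quadratic-form identity $B(\Lbx\vb,\vb)=\tfrac{1}{2N^2}\sum_{i,j}\xi_{ij}|v_i-v_j|^2$. The paper's proof is a one-line invocation of these two facts, while you spell out the coordinate slicing to reduce to $d=1$ and the projection onto $\mathrm{span}(\mathbf{1})^{\perp}$ explicitly; the content is the same.
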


\begin{proof} 
This follows from the definition of algebraic connectivity, along with the fact that
\begin{equation*}
B \big( \Lbx \, \vb , \vb \big) = \frac{1}{2N^2} \sum\limits_{i,j=1}^N \xi_{ij} |v_i - v_j|^2, 
\end{equation*}
for any $\vb \in (\R^d)^N$, see e.g. \cite[Section 2.2]{Motsch2014} for more details. 
\end{proof}

\begin{lem}
Let $\Lb_{\xi_1},\Lb_{\xi_2}$ be the graph-Laplacians associated to two interaction graphs with weights $(\xi_{ij}^1)$ and $(\xi_{ij}^2)$ respectively. Then 
\begin{equation*}
\Lb_{\Bxi} := \Lb_{\xi_1 + \xi_2} = \Lb_{\xi_1} + \Lb_{\xi_2},
\end{equation*}
is the graph-Laplacian of the \textnormal{union} of the two graphs, which weights are $(\Bxi_{ij}) = (\xi_{ij}^1 + \xi_{ij}^2)$.
\end{lem}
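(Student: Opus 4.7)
The plan is to prove this by direct verification using the definition \eqref{eq:GraphLaplacian_Def2} of the graph-Laplacian and the linearity of the summation defining its components. There is no deep obstacle here: the statement is essentially the observation that the map $\xi \mapsto \Lb_\xi$ is linear in the weights, and the content of the lemma is just to spell out what this linearity means graph-theoretically.

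First, I would fix an arbitrary $\yb = (y_1,\dots,y_N) \in (\R^d)^N$ and compute the $i$-th component $(\Lb_{\Bxi}\yb)_i$ directly from \eqref{eq:GraphLaplacian_Def2} with weights $\Bxi_{ij} = \xi_{ij}^1 + \xi_{ij}^2$. Splitting the sum over $j$ into two pieces gives
\begin{equation*}
(\Lb_{\Bxi}\yb)_i = \frac{1}{N}\sum_{j=1}^N (\xi_{ij}^1 + \xi_{ij}^2)(y_i - y_j) = \frac{1}{N}\sum_{j=1}^N \xi_{ij}^1(y_i - y_j) + \frac{1}{N}\sum_{j=1}^N \xi_{ij}^2(y_i - y_j),
\end{equation*}
and each of the two terms on the right-hand side is, by \eqref{eq:GraphLaplacian_Def2} applied respectively to $(\xi_{ij}^1)$ and $(\xi_{ij}^2)$, exactly the $i$-th component of $\Lb_{\xi_1}\yb$ and $\Lb_{\xi_2}\yb$. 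Since this identification holds for every $i \in \{1,\dots,N\}$ and every $\yb \in (\R^d)^N$, the operator identity $\Lb_{\xi_1 + \xi_2} = \Lb_{\xi_1} + \Lb_{\xi_2}$ follows.

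Finally I would comment, for the sake of the graph-theoretic interpretation, that $(\Bxi_{ij})$ is indeed a valid collection of non-negative interaction weights whenever the $(\xi_{ij}^k)$ are, and inherits the symmetry $\Bxi_{ij} = \Bxi_{ji}$ from that of the two summands, so that $(\Bxi_{ij})$ legitimately defines the weights of the union graph. The main (and only) step is thus the distributive expansion above; no estimate, no analytic argument is needed, which is why this lemma will serve as a purely algebraic building block for the subsequent discussion of \eqref{eq:PE} in terms of algebraic connectivity.
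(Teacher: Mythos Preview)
Your proof is correct. The paper actually states this lemma without proof, treating it as an immediate algebraic observation; your direct componentwise verification via \eqref{eq:GraphLaplacian_Def2} is exactly the natural way to justify it and matches the spirit in which the paper uses the result.
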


From now on, we fix $\tau \in \R_+^*$, an integer $n \geq 1$, and time-dependent communication rates $(\xi_{ij}(\cdot))$ which are constant on all the time intervals of the form $[\tfrac{m \tau}{n},\tfrac{(m+1)\tau}{n})$ for $m \geq 0$. 

\begin{prop}
Suppose that for all $m \geq 0$, the time-average of the graphs $\big\{\xi_{ij}(\tfrac{m+k}{n}\tau) \big\}_{k=0}^{n-1}$, whose weights are given by 
\begin{equation}
\label{eq:AveragedXi}
\Bxi_{ij}^m := \frac{1}{n} \sum\limits_{k=0}^{n-1} \xi_{ij} \big( \tfrac{m + k}{n} \tau \big),
\end{equation}
for any $i,j \in \{1,\dots,N \}$ is connected with $\lambda_2 \big( \Lb_{\Bxi^m} \big) \geq \mu$. Then \eqref{eq:PE} holds. 
\end{prop}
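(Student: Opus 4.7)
The plan is to verify \eqref{eq:PE} directly, with the very parameters $(\tau,\mu)$ supplied by the hypothesis. The first step is to exploit the piecewise-constant structure of $\xi_{ij}(\cdot)$ on the grid of spacing $\tau/n$: for any $t \geq 0$, decomposing $t = \tfrac{m\tau}{n}+\alpha$ with the unique $m \in \N$ and $\alpha \in [0,\tau/n)$, the sliding window $[t,t+\tau]$ sees exactly $n-1$ complete grid cells together with two partial cells at the endpoints, namely $[t,\tfrac{(m+1)\tau}{n})$ of length $\tau/n - \alpha$ and $[\tfrac{(m+n)\tau}{n},t+\tau]$ of length $\alpha$. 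A short bookkeeping computation using the fact that $\Lbx(\cdot)$ is constant on each cell then gives
\begin{equation*}
\tfrac{1}{\tau} \INTSeg{\Lbx(s)}{s}{t}{t+\tau} = \Lb_{\Bxi^m} + \tfrac{\alpha}{\tau} \Big( \Lbx\big(\tfrac{(m+n)\tau}{n}\big) - \Lbx\big(\tfrac{m\tau}{n}\big) \Big).
\end{equation*}

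The second step is to notice that the correction term telescopes. From \eqref{eq:AveragedXi} and the linearity of $\xi \mapsto \Lb_{\xi}$ afforded by the preceding lemma, one has $\Lbx(\tfrac{(m+n)\tau}{n}) - \Lbx(\tfrac{m\tau}{n}) = n\,(\Lb_{\Bxi^{m+1}} - \Lb_{\Bxi^m})$, and substituting yields the key \emph{convex combination} identity
\begin{equation*}
\tfrac{1}{\tau} \INTSeg{\Lbx(s)}{s}{t}{t+\tau} = (1-\beta)\,\Lb_{\Bxi^m} + \beta\, \Lb_{\Bxi^{m+1}}, \qquad \beta := \tfrac{\alpha n}{\tau} \in [0,1),
\end{equation*}
expressing the sliding average of $\Lbx(\cdot)$ over a window of length $\tau$ as a convex combination of two consecutive block averages $\Lb_{\Bxi^m}$ and $\Lb_{\Bxi^{m+1}}$.

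The conclusion is then immediate. The standing hypothesis $\lambda_2(\Lb_{\Bxi^j}) \geq \mu$ for every $j \in \N$, combined with the first preceding lemma, gives $B(\Lb_{\Bxi^{m+j}} \xb,\xb) \geq \mu\, B(\xb,\xb)$ for both $j = 0$ and $j = 1$; taking the corresponding convex combination of these two inequalities produces \eqref{eq:PE} with parameters $(\tau,\mu)$. The only real obstacle in this argument is the bookkeeping behind the convex combination identity above --- nothing deeper is at play once one observes that the window length $\tau$ aligns exactly with one full period of the grid of spacing $\tau/n$, so that the sliding integral always splits into $n-1$ full cells plus two complementary partial cells.
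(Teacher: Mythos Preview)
Your proof is correct and in fact sharper than the paper's. Both arguments start from the same splitting of the integral over $[t,t+\tau]$ into $n-1$ full cells plus two partial endpoint cells, but the paper then simply drops the smaller of the two endpoint contributions: since one of the two partial cells has length at least $\tfrac{\tau}{2n}$, the paper bounds the sliding average below by $\tfrac{1}{2}\Lb_{\Bxi^m}$ (or $\tfrac{1}{2}\Lb_{\Bxi^{m+1}}$, depending on which endpoint dominates) in the sense of $B(\cdot,\cdot)$, and concludes \eqref{eq:PE} with parameters $(\tau,\mu/2)$. Your telescoping observation $\Lbx(\tfrac{(m+n)\tau}{n}) - \Lbx(\tfrac{m\tau}{n}) = n\big(\Lb_{\Bxi^{m+1}} - \Lb_{\Bxi^m}\big)$ is the extra step the paper does not take; it turns the same decomposition into an \emph{exact} convex combination of $\Lb_{\Bxi^m}$ and $\Lb_{\Bxi^{m+1}}$, so that no constant is lost and \eqref{eq:PE} holds with the full $\mu$. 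This is a genuine improvement, and the argument is cleaner since it avoids the case distinction on which endpoint is larger.

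One cosmetic remark: you have named the convex parameter $\beta$, which in this paper already denotes the Cucker-Smale exponent from Hypothesis~\ref{hyp:K}; renaming it (say $\theta$) would avoid any clash.
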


\begin{proof}
For $m\geq 0$ and $t \in [\tfrac{m\tau}{n},\tfrac{(m+1)\tau}{n})$, we have 
\begin{equation}
\label{eq:IneqGraphLap}
\frac{1}{\tau} \INTSeg{\Lbx(s)}{s}{t}{t+\tau} = \Big( \tfrac{(m+1)}{n} - \tfrac{t}{\tau} \Big) \Lbx( \tfrac{m \tau}{n}) + \frac{1}{n}\sum_{k=1}^{n-1} \Lbx \big( \tfrac{m + k}{n} \tau \big) + \big( \tfrac{t}{\tau} - \tfrac{m}{n} \big)  \Lbx(\tfrac{(m+n)\tau}{n}). 
\end{equation}
Now, remark that $\max \big\{\tfrac{(m+1)}{n} - \tfrac{t}{\tau},\tfrac{t}{\tau} - \tfrac{m}{n} \big\} \geq \tfrac{1}{2n}$. Without loss of generality, assume that $\tfrac{(m+1)}{n} - \tfrac{t}{\tau} \geq \tfrac{1}{2n}$, so that by \eqref{eq:IneqGraphLap} it holds that 
\begin{equation*}
\begin{aligned}
& B \bigg( \Big( \tfrac{1}{\tau} \mathsmaller{\INTSeg{\Lbx(s)}{s}{t}{t+\tau}} \Big) \vb , \vb \bigg) \geq B \bigg( \Big( \tfrac{1}{2n} \mathsmaller{\sum}\limits_{k=0}^{n-1} \Lbx \big( \tfrac{m + k}{n} \tau \big) \Big) \vb , \vb \bigg) = B \Big( \Lb_{\Bxi^m/2} \, \vb , \vb \Big) \geq \tfrac{\mu}{2} B(\vb,\vb),
\end{aligned}
\end{equation*}
for all $\vb \in (\R^d)^N$, where the weights $(\Bxi_{ij}^m)$ are defined as in \eqref{eq:AveragedXi}.
\end{proof}

\begin{cor}
Suppose that the piecewise constant weights $(\xi_{ij}(\cdot))$ take their values in an arbitrary \textnormal{finite} set $I \subset [0,1]$. Then \eqref{eq:PE} holds if and only if for all $m \geq 0$, the time-averaged graph whose weights $(\Bxi_{ij}^m)$ are given by \eqref{eq:AveragedXi} is connected.
\end{cor}

\begin{proof} 
The direct implication of this statement is evident. For the converse one, observe that since $I \subset [0,1]$ is a finite set, there only exists a finite number of graphs with weights given by \eqref{eq:AveragedXi} which are connected. In particular, the quantity
\begin{equation*}
\mu := \min \Big\{ \lambda_2 \big( \Lb_{\Bxi^m} \big) ~\text{s.t. $(\Bxi_{ij}^m)$ are given by \eqref{eq:AveragedXi} and generate a connected graph} \Big\},
\end{equation*}
is positive and independent of $m \geq 0$. Thus, \eqref{eq:PE} holds with parameters $(\tau,\mu) \in \R_+^* \times (0,1]$.  
\end{proof}

We now illustrate these general results for piecewise constant communication rates on a simple example with $N=4$ agents. For $\tau \in \R_+^*$ and $t \geq 0$, consider the interactions weights defined as follows
\begin{equation}
\label{eq:ExampWeight}
\begin{aligned}
\xi_{14}(t) & = 
\left\{
\begin{aligned}
& 1 ~~ \text{if $ \lfloor t/\tau \rfloor = 1 \, \textnormal{mod}[6]$,} \\
& 0 \hspace{1.7cm} \text{otherwise,}
\end{aligned}
\right. 
\qquad \xi_{34}(t) = 
& \left\{
\begin{aligned}
& 1 ~~ \text{if $ \lfloor t/\tau \rfloor = 3 \, \textnormal{mod}[6]$,} \\
& 0 \hspace{1.7cm} \text{otherwise,}
\end{aligned}
\right. \\
\xi_{23}(t) =  \xi_{24}(t) & =
\left\{
\begin{aligned}
& 1 ~~ \text{if $ \lfloor t/\tau \rfloor = 5 \, \textnormal{mod}[6]$,} \\
& 0 \hspace{1.7cm} \text{otherwise,}
\end{aligned}
\right.
\end{aligned}
\end{equation}
where $\lfloor \cdot \rfloor$ denotes the lower integer part of a real number, and set all the other weights to $0$. In this example, our signals are piecewise constant on intervals of the form $[\tfrac{m\tau}{6},\tfrac{(m+1)\tau}{6})$ for any $m \geq 0$.

\begin{figure}[!ht]
\centering
\resizebox{0.6 \linewidth}{!}{
\begin{tikzpicture}
\draw (0,0) node[shape=circle, draw] {$1$}; 
\draw (2,0) node[shape=circle, draw] {$2$};
\draw (2,-2) node[shape=circle, draw] {$3$};
\draw (0,-2) node[shape=circle, draw] {$4$};
\draw[<->, blue, line width = 0.35mm] (0,-0.4)--(0,-1.6); 
\draw[blue] (-0.25,-1)node {$1$}; 
\draw[blue] (1,-3.25) node {\large $\lfloor t/\tau \rfloor = 1 \, \textnormal{mod}[6]$};
%
\draw[shift={(4 cm,0 cm)}] (0,0) node[shape=circle, draw] {$1$}; 
\draw[shift={(4 cm,0 cm)}] (2,0) node[shape=circle, draw] {$2$};
\draw[shift={(4 cm,0 cm)}] (2,-2) node[shape=circle, draw] {$3$};
\draw[shift={(4 cm,0 cm)}] (0,-2) node[shape=circle, draw] {$4$};
\draw[shift={(4 cm,0 cm)},<->, blue, line width = 0.35mm] (0.4,-2)--(1.6,-2); 
\draw[shift={(4 cm,0 cm)}, blue] (1,-2.3)node {$1$}; 
\draw[shift={(4 cm,0 cm)}, blue] (1,-3.25) node {\large  $\lfloor t/\tau \rfloor = 3 \, \textnormal{mod}[6]$}; 
%
\draw[shift={(8 cm,0 cm)}] (0,0) node[shape=circle, draw] {$1$}; 
\draw[shift={(8 cm,0 cm)}] (2,0) node[shape=circle, draw] {$2$};
\draw[shift={(8 cm,0 cm)}] (2,-2) node[shape=circle, draw] {$3$};
\draw[shift={(8 cm,0 cm)}] (0,-2) node[shape=circle, draw] {$4$};
\draw[shift={(10 cm,0 cm)}, <->, blue, line width = 0.35mm] (0,-0.4)--(0,-1.6); 
\draw[shift={(8.95 cm,-2.375 cm)}, rotate = -45,<->, blue, line width = 0.3mm] (-1,0)--(-1,2); 
\draw[shift={(8 cm,0 cm)}, blue] (1,-3.25) node {\large $\lfloor t/\tau \rfloor = 5 \, \textnormal{mod}[6]$}; 
\draw[shift = {(10.5 cm, 0cm)}, blue] (-0.25,-1)node {$1$};
\draw[shift = {(8 cm, 0cm)}, blue] (0.7,-0.8)node {$1$};
\end{tikzpicture}}
\caption{\textit{Illustration of the admissible connections between agents}}
\end{figure}

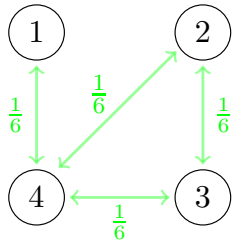
\begin{figure}[!ht]
\centering
\resizebox{0.2 \linewidth}{!}{
\begin{tikzpicture}
\draw (0,0) node[shape=circle, draw] {$1$}; 
\draw (2,0) node[shape=circle, draw] {$2$};
\draw (2,-2) node[shape=circle, draw] {$3$};
\draw (0,-2) node[shape=circle, draw] {$4$};
\draw[<->, opacity =0.4, green, line width = 0.35mm] (0,-0.4)--(0,-1.6); 
\draw[<->, opacity =0.4, green, line width = 0.35mm] (0.4,-2)--(1.6,-2); 
\draw[shift = {(2 cm,0 cm)}, <->, opacity =0.4, green, line width = 0.35mm] (0,-0.4)--(0,-1.6); 
\draw[shift={(0.975 cm,-2.35 cm)}, rotate = -45,<->, <->, opacity =0.4, green, line width = 0.35mm] (-1,0)--(-1,2); 
\draw[green] (-0.25,-1) node {\small $\tfrac{1}{6}$};
\draw[shift = {(2.5cm,0cm)}, green] (-0.25,-1) node {\small $\tfrac{1}{6}$}; 
\draw[green] (1,-2.3) node {\small $\tfrac{1}{6}$}; 
\draw[green] (0.75,-0.75)node {$\tfrac{1}{6}$};
\end{tikzpicture}}
\caption{\textit{Illustration of the averaged interaction graph on a time window of the form $[t,t+\tau]$}}
\end{figure}

Then, the weights $(\xi_{ij}(\cdot))$ defined in \eqref{eq:ExampWeight} are such that the persistence condition \eqref{eq:PE} holds. This can be verified e.g. by computing the smallest positive eigenvalue of the averaged graph-Laplacian matrix $\Lb_{\Bxi^m}$, where $\Bxi^m$ is defined as in \eqref{eq:AveragedXi} with $t \in [\tfrac{m \tau}{6},\tfrac{(m+1)\tau}{6})$. In this example, the spectrum of $\Lb_{\Bxi^m}$ for all $m \geq 0$ is given explicitly by
\begin{equation*}
\textnormal{Sp} \big( \Lb_{\Bxi^m} \big) = \Big\{ 0,\tfrac{1}{6},\tfrac{1}{2},\tfrac{2}{3} \Big\}, 
\end{equation*}
so that \eqref{eq:PE} holds with $\tau \in \R_+^*$ and $\mu := \lambda_2 \big( \Lb_{\Bxi^m} \big) = \tfrac{1}{6}$.


\section{Conclusion and perspectives}
\label{section:Conclusion}

In this article, we proved two convergence results for multi-agent systems subject to general multiplicative communication failures. If the communication rates satisfy a persistence of excitation condition, then one has non-uniform exponential convergence to consensus for first-order systems (Theorem \ref{thm:Consensus}) and to flocking for Cucker-Smale systems, under a strengthened version of the usual fat tail condition on the kernel (Theorem \ref{thm:Flocking}). For the sake of conciseness and readability, we assumed that the initial time of the non-stationary dynamics was fixed and equal to 0. Yet, it could be checked by repeating our argument that both convergence results are \textit{uniform with respect to the initial time}, as the estimates derived on the Lyapunov functionals $\Xcal_{\tau}(\cdot)$ and $\Vcal_{\tau}(\cdot)$ do not exhibit any explicit time-dependence. In the future, we aim at improving our main result Theorem \ref{thm:Flocking} in three directions. 

First, we will investigate whether the rather surprising exponent range $\beta \in (0,\tfrac{1}{2})$ -- which is currently needed in order to ensure that asymptotic flocking occurs -- has an intrinsic meaning, or if it just arises as a limit of our current choice of Lyapunov function. Answering this question might also pave the way for flocking results with weaker interactions, involving confinement conditions linking the initial state and velocity mean-deviations as well as the persistence parameters. 

Then, we will study communication failures defined as the realisations of stochastic processes and try to see under which assumptions and in what sense the convergence towards consensus and flocking can occur (almost surely, in probability, etc...). In this setting, one of the main difficulties will most likely lie in the identification of proper stochastic generalisations of \eqref{eq:PE}. 

Lastly, we will investigate whether our dissipative approach applied here to the standard deviations -- which are $L^2$-functionals --, can be adapted to $L^{\infty}$-type Lyapunov functionals in the spirit of \cite{HaLiu,ControlKCS}. The motivation behind this line of study is that $L^2$-type functionals do not allow for the study of flocking formation in the macroscopic setting as the number $N$ of agents goes to infinity, while $L^{\infty}$-type functionals typically do.  

\begin{flushleft}
{\small \textbf{Acknowledgments}: The authors were partially supported by the Archim\`ede Labex (ANR-11-LABX-0033) and the A*MIDEX project (ANR-11-IDEX-0001-02), funded by the ``Investissements d'Avenir'' French Government program. The authors wish to dearly thank Francesco Rossi for suggesting a preliminary version of the problem that we treated here, as well as Ioannis Sarras for his numerous insights on the topic of strict Lyapunov design for persistent systems.
}
\end{flushleft}

\bibliographystyle{plain}
{\footnotesize
\bibliography{../ControlWassersteinBib}
}

\end{document}